\title{Forward-backward algorithms devised by graphs%
\thanks{The authors were partially supported by Grant PID2022-136399NB-C21 funded by ERDF/EU and by MICIU/AEI/10.13039/501100011033. C.L.P. was supported by Grant PREP2022-000118 funded by MICIU/AEI/10.13039/501100011033 and by ``ESF Investing in your future''.}
}
\date{}
\newcommand*\samethanks[1][\value{footnote}]{\footnotemark[#1]}
\author{Francisco J. Arag\'on-Artacho\thanks{Department of Mathematics,
University of Alicante, \textsc{Spain}. e-mail:~\href{mailto:francisco.aragon@ua.es}{francisco.aragon@ua.es}, \href{mailto:ruben.campoy@ua.es}{ruben.campoy@ua.es}, \href{mailto:cesar.lopez@ua.es}{cesar.lopez@ua.es}}
        \and Rub\'en Campoy\samethanks[2]
        \and C\'esar L\'opez-Pastor\samethanks[2]
}
\newtheorem{theorem}{Theorem}[section]
\newtheorem{definition}[theorem]{Definition}
\newtheorem{proposition}[theorem]{Proposition}
\newtheorem{lemma}[theorem]{Lemma}
\newtheorem{fact[theorem]}{Fact}
\newtheorem{remark}[theorem]{Remark}
\newtheorem{example}[theorem]{Example}
\newtheorem{assumption}[theorem]{Assumption}
\DeclareMathAlphabet\mbc{OMS}{cmsy}{b}{n}
\newcommand{\zer}{\operatorname{zer}}
\newcommand{\rk}{\operatorname{rank}}
\newcommand{\Id}{\operatorname{Id}}
\newcommand{\wto}{\rightharpoonup}
\newcommand{\Hi}{\mathcal{H}}
\newcommand{\R}{\mathbb{R}}
\newcommand{\pre}{p}
\newcommand{\norm}[1]{\left\lVert #1\right\rVert}
\let\texdisplaystyle\displaystyle
\def\displaytotextstyle{\textstyle\let\displaystyle\texdisplaystyle}
\newenvironment{taligned}
{\let\displaystyle\displaytotextstyle\aligned}
{\endaligned}
\def\tto{\rightrightarrows}
\Crefname{fact}{Fact}{Facts}
\Crefname{enumi}{}{}
\let\epsilon\varepsilon
\begin{document}

\maketitle

\begin{abstract}
    In this work, we present a methodology for devising forward-backward methods for finding zeros in the sum of a finite number of maximally monotone operators. We extend the framework and techniques from~[\emph{SIAM J. Optim.}, 34 (2024), pp. 1569--1594] to cover the case involving a finite number of cocoercive operators, which should be directly evaluated instead of computing their resolvent. The algorithms are induced by three graphs that determine how the algorithm variables interact with each other and how they are combined to compute each resolvent. The hypotheses on these graphs ensure that the algorithms obtained have minimal lifting and are frugal, meaning that the ambient space of the underlying fixed point operator has minimal dimension and that each resolvent and each cocoercive operator is evaluated only once per iteration. This framework not only allows to recover some known methods, but also to generate new ones, as the forward-backward algorithm induced by a complete graph. We conclude with a numerical experiment showing how the choice of graphs influences the performance of the algorithms.
\end{abstract}

\paragraph{Keywords} Monotone inclusion · Forward-backward algorithm · Cocoercive operator · Frugal splitting algorithm · Minimal lifting

\paragraph{MSC2020} 47H05 · 47N10 · 65K10 · 90C25

\section{Introduction}
In this work we are interested in developing algorithms for solving structured monotone inclusion problems of the form
\begin{equation}\label{eq:Prob}
\text{find } x\in\Hi \text{ such that } 0\in\left( \sum_{i=1}^n A_i + \sum_{i=1}^m B_i \right)(x),
\end{equation}
where $A_1,\ldots,A_n:\Hi\tto\Hi$ are (set-valued) maximally monotone operators on a Hilbert space~$\Hi$, while the single-valued operators $B_1,\ldots,B_m:\Hi\to\Hi$ are cocoercive (we write $m=0$ if there are no cocoercive operators). When the sum is itself maximally monotone,  inclusion~\eqref{eq:Prob} can be tackled with the \emph{proximal point algorithm} \cite{RockaProx} (see Section~\ref{sec:PPPA}).
However, this approach becomes impractical since the resolvent of the sum  is usually not computable.

Splitting algorithms of forward-backward-type are so called because they take advantage of the structure of \eqref{eq:Prob}, establishing an iterative process that only requires the computation of individual resolvents of the maximally monotone operators $A_1,\ldots,A_n$ (backward steps) and direct evaluations of $B_1,\ldots,B_m$ (forward steps), combined by vector additions and scalar multiplications. If each resolvent and each cocoercive operator is computed exactly once per iteration, then the algorithm is said to be a \emph{frugal resolvent splitting}, a terminology introduced by Ryu in his seminal work~\cite{ryu20}. For instance, a frugal resolvent splitting for \eqref{eq:Prob} when $n=2$ and $m=1$ is the \emph{Davis--Yin splitting algorithm}~\cite{davisyin}, whose iterations take the form
\begin{equation}\label{eq:DavisYin}
\left\{ \begin{aligned}
x_1^{k+1} &=   J_{\gamma A_1}\left(w^k\right),\\
x_2^{k+1} &=   J_{\gamma A_2}\left( 2x_1^{k+1}-w^k-\gamma B(x_1^{k+1}) \right),\\
w^{k+1} &=   w^k + \theta_k \left(x_2^{k+1}-x_1^{k+1}\right), \\
\end{aligned}\right.
\end{equation}
for some starting point $w^0\in\Hi$ and $k= 0,1,\ldots$, where the positive scalars $\gamma$ and $\theta_k$ are some appropriately chosen parameters.
This method encompasses the forward-backward method (when $A_1=0$) and the backward-forward method \cite{backforw} (when $A_2=0$), which are both frugal resolvent splittings for \eqref{eq:Prob} when
$n=m=1$, as well as the Douglas--Rachford algorithm~\cite{LM79} (when $B=0$), for $n=2$ and $m=0$. Note that, although iteration \eqref{eq:DavisYin} is described by three variables, we can discern two
distinct types among them. On the one hand, only $w^k$ needs to be stored to compute the next iterate, so we will refer to it as a \emph{governing variable}. In contrast, the sequences $x_1^k$ and $x_2^k$,
which are obtained via resolvent computations, are precisely the ones that converge to a solution to~\eqref{eq:Prob}. We will refer to them as \emph{resolvent variables}.

In scenarios involving $n$ maximally monotone operators and $m=1$ cocoercive operator, we can employ the \emph{generalized forward-backward algorithm} \cite{genFB}, which iterates as
\begin{equation}\label{eq:genFB}
\left\{ \begin{taligned}
x_i^{k+1} &=   J_{\gamma A_i}\left(\tfrac{2}{n}\sum_{j=1}^n w_j^k-w_i^k-\tfrac{\gamma}{n} B(\tfrac{1}{n}\sum_{j=1}^n w_j^k)\right), \quad \forall i\in \llbracket 1, n\rrbracket,\\
w_i^{k+1} &=   w_i^k + \theta_k \left(x_i^{k+1}-\tfrac{1}{n}\sum_{j=1}^n w_j^k\right), \qquad \forall i\in \llbracket 1, n\rrbracket,
\end{taligned}\right.
\end{equation}
where $\llbracket 1,n\rrbracket=\{1,2\ldots,n\}$.
This algorithmic scheme can be deduced by applying the Davis--Yin algorithm to an adequate reformulation of \eqref{eq:Prob} in a product space~\cite{pierra}. Note that, however, the generalized forward-backward algorithm does not encompass the Davis--Yin method. This distinction is evident in the number of governing variables of each algorithm, which is termed as \emph{lifting}. While algorithm \eqref{eq:genFB} has $n$-fold lifting, due to the need of storing the value of the $n$ (governing) variables $w_1^k,\ldots,w_n^k$ at each iteration, Davis--Yin has $1$-fold lifting, as only $w^k$ in \eqref{eq:DavisYin} is required to be saved. Generally, a reduction in lifting may be preferred as it results in computational memory savings.

The notion of lifting also traces back to the work of Ryu~\cite{ryu20}, who proved that for three maximally monotone operators (i.e., problem \eqref{eq:Prob} for $n=3$ and $m=0$) the minimal lifting is $2$. This result was later generalized by Malitsky--Tam~\cite{malitsky2023resolvent} for an arbitrary number $n$ of maximally monotone operators, establishing a minimal lifting of $n-1$. As a generalization of the algorithm introduced in~\cite{malitsky2023resolvent}, the authors of~\cite{ring-networks} proposed the forward-backward-type algorithm given by
\begin{equation}\label{eq:ringFB}
\left\{ \begin{aligned}
x_1^{k+1} &=  J_{\gamma A_1}\left(w_1^k\right),\\
x_i^{k+1} &=   J_{\gamma A_i}\left( x_{i-1}^{k+1}+ w_i^k -w_{i-1}^k-\gamma B_{i-1}( x_{i-1}^{k+1}) \right), \quad \forall i\in \llbracket 2, n-1\rrbracket,\\
x_n^{k+1} &=  J_{\gamma A_n}\left( x_{1}^{k+1}+x_{n-1}^{k+1}-w_{n-1}^k-\gamma B_{n-1}( x_{n-1}^{k+1}) \right),\\
w_i^{k+1} &=   w_i^k + \theta \left(x_{i+1}^{k+1}-x_{i}^{k+1}\right), \qquad \forall i\in \llbracket 1, n-1\rrbracket,
\end{aligned}\right.
\end{equation}
which allows solving problem~\eqref{eq:Prob} when $m=n-1$. Evaluating exactly one cocoercive operator inside each resolvent in~\eqref{eq:ringFB} serves to cover different settings, as the sum of cocoercive operators is itself cocoercive (see~\cite[Remark~2]{ring-networks}). Observe that, in contrast to the generalized forward-backward, algorithm \eqref{eq:ringFB} has minimal lifting and, now, it recovers the Davis--Yin algorithm as a special case.

Apart from having different lifting, algorithms \eqref{eq:genFB} and \eqref{eq:ringFB} exhibit contrasting structures of interdependence between the resolvent variables. In algorithm \eqref{eq:genFB}, variables $x_1^k, \ldots, x_n^k$ can be independently updated since none of them relies on the others, enabling thus a parallel implementation. In contrast, updating each $x_2^k,\ldots,x_{n-1}^k$ in algorithm \eqref{eq:ringFB} requires the preceding one, whereas $x_n^{k}$ depends on both $x_{n-1}^{k}$ and $x_1^k$. Consequently, the latter scheme is conducive to a decentralized implementation on a ring network topology. Subsequent developments have given rise to other schemes with different structures of interdependence between their variables (see, for instance, \cite{degenerate-ppp,condat2023proximal,MBG24,tam2023frugal}).

In the recent work~\cite{graph-drs}, the authors provide a unifying framework for systematically constructing frugal splitting algorithms with minimal lifting for finding zeros in the sum of $n$ maximally monotone operators (i.e., problem~\eqref{eq:Prob} with $m=0$). Their methodology involves reformulating the original monotone inclusion into an equivalent one which is described by a single operator constructed in the  larger space $\Hi^{2n-1}$. The relationships between the governing and resolvent variables are modeled through a connected directed graph and a subgraph, and the single operator is constructed in such a way that it integrates this information. The resulting monotone inclusion is addressed by using the \emph{degenerate preconditioned proximal point algorithm} of~\cite{degenerate-ppp}, where the preconditioner is defined through the subgraph.

In this work, we generalize and combine the methodologies presented in \cite{graph-drs} and \cite{degenerate-ppp} to also allow cocoercive operators to be integrated into the iterative process. This is done by incorporating an additional subgraph to model the action of the cocoercive operators. Our framework yields a novel family of forward-backward-type algorithms for solving~\eqref{eq:Prob}, accommodating different interdependence structures. In particular, it covers~\eqref{eq:ringFB} and other recently developed algorithms as special cases, and also allows to derive a promising novel forward-backward algorithm with full connectivity based on the complete graph.

The structure of the paper is as follows. We begin in Section~\ref{sec:prelim} by introducing the notation and main concepts. In Section~\ref{sec:operators}, we state the graph settings that give rise to our family of forward-backward algorithms, we construct the operators to which the preconditioned proximal point algorithm is applied, and analyze the main properties of these operators. In Section~\ref{sec:method}, we derive our main algorithm, prove its convergence and study some of its particular instances. Section~\ref{sec:experiments} is devoted to a numerical experiment in which we test how the graphs defining the algorithm affect the performance. We finish with some conclusions in Section~\ref{sec:conclusion}.

\section{Preliminaries}\label{sec:prelim}

Throughout this work, $\Hi$ is a real Hilbert space with inner product $\langle\cdot,\cdot\rangle$ and associated norm~$\norm{\cdot}$. We denote strong convergence of sequences by $\to$ and use $\rightharpoonup$ for weak convergence. Vectors in product spaces are marked with bold, e.g., $\mathbf{x}=(x_1,\ldots,x_n)\in\Hi^n$. If $(\Hi_i,\langle\cdot,\cdot\rangle_i)$ are Hilbert spaces for $i=1,\ldots,n$ and $\mathbf{x},\mathbf{y}\in\bigtimes_{i=1}^n \Hi_i$, then the operation $\langle \mathbf{x},\mathbf{y}\rangle:=\sum_{i=1}^n\langle x_i,y_i\rangle_i$ defines an inner product in $\bigtimes_{i=1}^n \Hi_i$.

A \emph{set-valued operator}, denoted by $A:\Hi\rightrightarrows \Hi$, is a map $A:\Hi\to 2^\Hi$, where $2^\Hi$ is the power set of $\Hi$. That is, for all $x\in \Hi$, $A(x)\subseteq \Hi$. On the other hand, if $B$ is an operator such that $B(x)$ is a singleton for all $x\in \Hi$, then $B$ is said the be a \emph{single-valued operator}, which is denoted by $B:\Hi\to \Hi$ and, by an abuse of notation, we will write $B(x)=y$ instead of $B(x)=\{y\}$.

Given a set-valued operator $A:\Hi\rightrightarrows \Hi$, the \emph{domain}, the \emph{range}, the \emph{graph}, the \emph{fixed points} and the \emph{zeros} of $A$ are, respectively,
\begin{align*}
\operatorname{dom}A&:=\{x\in \Hi: A(x)\neq\varnothing\}, & \operatorname{ran}A&:=\{u\in \Hi: u\in A(x)\text{ for some }x\in \Hi\},\\
\operatorname{gra}A&:=\{(x,u)\in \Hi\times \Hi: u\in A(x)\},& \operatorname{fix}A&:=\{x\in \Hi: x\in A(x)\},\\
\operatorname{zer}A&:=\{x\in \Hi: 0\in A(x)\}.
\end{align*}
The \emph{inverse} is the set-valued operator $A^{-1}:\Hi\rightrightarrows \Hi$ such that $x\in A^{-1}(u)\Leftrightarrow u\in A(x)$.

\begin{definition}\label{def: monotone}
    We say that a set-valued operator $A:\Hi\rightrightarrows \Hi$ is \emph{monotone} if
    $$\langle x-y,u-v\rangle\geq0\quad \forall (x,u),(y,v)\in \operatorname{gra}A.$$
    Further, $A$ is \emph{maximally monotone} if for all $A^\prime:\Hi\rightrightarrows \Hi$ monotone, $\operatorname{gra}A\subseteq\operatorname{gra}A^\prime$ implies $A=A^\prime$.
\end{definition}

\begin{proposition}[{\cite[Corollary 20.28]{bauschke}}]\label{prop: maximally monotone continuous}
    Let $A:\Hi\to \Hi$ be monotone and continuous. Then $A$ is maximally monotone.
\end{proposition}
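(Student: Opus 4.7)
The plan is to use the standard maximality test: show that if $A'\supseteq A$ is any monotone extension, then in fact $\operatorname{gra} A'=\operatorname{gra} A$. Equivalently, I aim to prove that any pair $(y,v)\in\Hi\times\Hi$ satisfying
\[
\langle y-x,\, v-A(x)\rangle\ge 0 \quad \forall x\in\Hi
\]
must lie in $\operatorname{gra} A$, i.e., $v=A(y)$. This is essentially the one-line reduction: full-domain monotone plus continuous operators admit no proper monotone extension.

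The key step is a standard line test exploiting continuity. Fix such a pair $(y,v)$ and any direction $z\in\Hi$. For $t>0$, plug $x=y-tz$ into the inequality above to obtain $\langle tz,v-A(y-tz)\rangle\ge 0$, hence $\langle z,v-A(y-tz)\rangle\ge 0$. Letting $t\downarrow 0$ and using continuity of $A$ at $y$, I conclude $\langle z, v-A(y)\rangle\ge 0$. Since $z$ was arbitrary, applying the same argument with $-z$ in place of $z$ yields $\langle z, v-A(y)\rangle=0$ for every $z\in\Hi$, and therefore $v=A(y)$, as desired.

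Finally, to close the maximality argument, suppose $A':\Hi\tto\Hi$ is monotone with $\operatorname{gra} A\subseteq \operatorname{gra} A'$ and take any $(y,v)\in\operatorname{gra} A'$. Since $\operatorname{dom} A=\Hi$ and $A$ is single-valued, every pair $(x,A(x))$ belongs to $\operatorname{gra} A\subseteq\operatorname{gra} A'$, so monotonicity of $A'$ gives exactly the displayed inequality for $(y,v)$. By the previous paragraph, $v=A(y)$, so $(y,v)\in\operatorname{gra} A$. This shows $\operatorname{gra} A'\subseteq\operatorname{gra} A$, and hence $A=A'$, establishing maximal monotonicity.

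The only delicate point is the passage to the limit $t\downarrow 0$, which is immediate from continuity of $A$ combined with continuity of the inner product; there is no real obstacle, which is why the result is stated as a one-line corollary in the reference \cite{bauschke}.
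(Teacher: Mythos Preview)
Your proof is correct and follows the standard argument for this classical result. Note, however, that the paper does not supply its own proof of this proposition: it is stated as a citation to \cite[Corollary~20.28]{bauschke} and used as a black box, so there is no in-paper argument to compare against. Your line-test approach (perturbing along $x=y-tz$ and letting $t\downarrow 0$) is precisely the textbook proof one finds in the cited reference.
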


\begin{proposition}[{\cite[Corollary 25.5]{bauschke}}]\label{prop: sum of maximally monotone}
    Let $A_1,A_2:\Hi\rightrightarrows \Hi$ be maximally monotone. If $\operatorname{dom}A_2=\Hi$, then $A_1+A_2$ is maximally monotone.
\end{proposition}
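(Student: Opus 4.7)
The approach is via Minty's surjectivity criterion: a monotone operator $T:\Hi\tto\Hi$ is maximally monotone if and only if $\operatorname{ran}(\operatorname{Id}+T)=\Hi$.

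Monotonicity of $A_1+A_2$ is immediate from summing the monotonicity inequalities for $A_1$ and $A_2$, so the heart of the proof is the surjectivity of $\operatorname{Id}+A_1+A_2$. Fix $z\in\Hi$; the goal is to produce $x\in\Hi$ with $z\in x+A_1(x)+A_2(x)$.

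The plan is to regularize $A_2$ by its Yosida approximation $A_2^\lambda:=\lambda^{-1}(\operatorname{Id}-J_{\lambda A_2})$ for $\lambda>0$, which is single-valued, monotone, $\lambda^{-1}$-Lipschitz continuous and everywhere defined on $\Hi$, hence maximally monotone by Proposition~\ref{prop: maximally monotone continuous}. The regularized inclusion $z\in x_\lambda+A_1(x_\lambda)+A_2^\lambda(x_\lambda)$ is equivalent to the fixed point equation $x_\lambda=J_{A_1}(z-A_2^\lambda(x_\lambda))$; for $\lambda>1$ the right-hand side is a strict contraction on $\Hi$ (the resolvent $J_{A_1}$ is nonexpansive and $A_2^\lambda$ has Lipschitz constant below $1$), so Banach's fixed point theorem supplies a solution $x_\lambda$. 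A continuation argument in $\lambda$ then extends existence to every $\lambda>0$.

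Letting $\lambda\to 0^+$, I would combine monotonicity estimates tested against a reference pair with the local boundedness of $A_2$ (a consequence of $\operatorname{dom}A_2=\Hi$) to show that $(x_\lambda)$ and $(A_2^\lambda(x_\lambda))$ remain bounded. Extract a weakly convergent subnet with $x_\lambda\wto x^*$ and $A_2^\lambda(x_\lambda)\wto y^*$. Since $A_2^\lambda(x_\lambda)\in A_2(x_\lambda-\lambda A_2^\lambda(x_\lambda))$ and $x_\lambda-\lambda A_2^\lambda(x_\lambda)\to x^*$ strongly, the weak-strong closure of $\operatorname{gra}A_2$ yields $y^*\in A_2(x^*)$; the analogous closure property for $A_1$ then gives $z-x^*-y^*\in A_1(x^*)$, i.e., $z\in x^*+A_1(x^*)+A_2(x^*)$, verifying Minty's criterion.

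The main obstacle lies in the limit passage: uniform boundedness of $(A_2^\lambda(x_\lambda))$ as $\lambda\to 0^+$ is delicate and is precisely where the hypothesis $\operatorname{dom}A_2=\Hi$ enters essentially — without it, a constraint qualification of Rockafellar type would be required to compensate.
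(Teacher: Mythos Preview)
The paper does not prove this proposition; it is quoted verbatim as \cite[Corollary~25.5]{bauschke} and used as a black box. So there is no in-paper argument to compare against, and your sketch is effectively an attempt to reprove a standard result from the literature via the classical Yosida-regularization route (essentially the Browder--Brezis approach underlying Rockafellar's sum rule).

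That said, your outline has a genuine gap in the limit passage. You assert that $x_\lambda-\lambda A_2^\lambda(x_\lambda)\to x^*$ \emph{strongly}, and then invoke the weak--strong closedness of $\operatorname{gra}A_2$. But from your own setup you only have $x_\lambda\rightharpoonup x^*$ weakly (an extracted subnet) together with $\lambda A_2^\lambda(x_\lambda)\to 0$ strongly; this yields only $x_\lambda-\lambda A_2^\lambda(x_\lambda)\rightharpoonup x^*$ \emph{weakly}. You are therefore trying to pass to the limit in $\operatorname{gra}A_2$ with weak convergence in \emph{both} slots, and graphs of maximally monotone operators are not weak$\times$weak closed in general. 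The same issue reappears for $A_1$: you have $x_\lambda\rightharpoonup x^*$ and $z-x_\lambda-A_2^\lambda(x_\lambda)\rightharpoonup z-x^*-y^*$, again weak in both coordinates.

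The standard repair is not to upgrade either convergence to strong, but to control the cross term: from $a_\lambda+b_\lambda=z-x_\lambda$ with $a_\lambda\in A_1(x_\lambda)$ and $b_\lambda=A_2^\lambda(x_\lambda)\in A_2(J_{\lambda A_2}x_\lambda)$, one shows
\[
\limsup_{\lambda\downarrow 0}\big(\langle a_\lambda,x_\lambda\rangle+\langle b_\lambda,J_{\lambda A_2}x_\lambda\rangle\big)\le \langle z-x^*-y^*,x^*\rangle+\langle y^*,x^*\rangle,
\]
using $\liminf\|x_\lambda\|^2\ge\|x^*\|^2$ and $\lambda\|b_\lambda\|^2\to 0$. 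A Brezis-type two-operator passage-to-the-limit lemma (splitting the limsup and testing each piece against arbitrary graph points of $A_1$, $A_2$) then forces $(x^*,z-x^*-y^*)\in\operatorname{gra}A_1$ and $(x^*,y^*)\in\operatorname{gra}A_2$ simultaneously. Without this device, your demiclosedness step does not go through as written. Your ``continuation in $\lambda$'' is also only gestured at; the clean way is to first treat Lipschitz $B$ with constant $L<1$ via Banach, then bootstrap to arbitrary $L$ by writing $B=\sum_{j=1}^N \tfrac{1}{N}B$ and adding one summand at a time.
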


In the context of splitting algorithms, the resolvent operator is central. It is defined as follows.

\begin{definition}\label{def: resolvent}
    Let $A:\Hi\rightrightarrows \Hi$ be a set-valued operator. The \emph{resolvent} of $A$ is
    \[J_A:=(\Id_\Hi+A)^{-1}.\]
\end{definition}

\begin{lemma}[{\cite{minty}}]\label{lem: single-valued full domain resolvent}
    Let $A:\Hi\rightrightarrows \Hi$ be monotone. Then:
    \begin{enumerate}[label=(\roman*)]
        \item $J_A$ is single-valued;
        \item $A$ is maximally monotone if and only if $\operatorname{dom} J_A=\Hi$.
    \end{enumerate}
\end{lemma}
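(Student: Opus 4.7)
For part (i), the plan is a direct computation from the definition. I would pick any $x\in\Hi$ and any $y_1,y_2\in J_A(x)$; by Definition~\ref{def: resolvent} this translates to $x-y_i\in A(y_i)$ for $i=1,2$. Feeding the pairs $(y_1,x-y_1)$ and $(y_2,x-y_2)$ into the monotonicity inequality of Definition~\ref{def: monotone} gives
\[
0\leq \langle y_1-y_2,(x-y_1)-(x-y_2)\rangle = -\norm{y_1-y_2}^2,
\]
forcing $y_1=y_2$. This handles (i) regardless of maximality.

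For part (ii), the key observation is that $\operatorname{dom}J_A=\operatorname{ran}(\Id+A)$, again by Definition~\ref{def: resolvent}, so the claim becomes Minty's classical surjectivity characterization: $A$ is maximally monotone if and only if $\operatorname{ran}(\Id+A)=\Hi$. I would treat the two implications separately.

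The reverse direction admits a short, self-contained argument that I would spell out. Assume $\operatorname{ran}(\Id+A)=\Hi$ and suppose $(y,v)\in\Hi\times\Hi$ satisfies $\langle y-x,v-u\rangle\geq 0$ for every $(x,u)\in\operatorname{gra}A$; to prove maximality it suffices to deduce $v\in A(y)$. Surjectivity of $\Id+A$ yields some $z\in\Hi$ with $y+v\in z+A(z)$, i.e.\ $(z,y+v-z)\in\operatorname{gra}A$. Substituting this pair into the assumed inequality collapses it to $-\norm{y-z}^2\geq 0$, so $y=z$, and consequently $v=y+v-z\in A(z)=A(y)$, as desired.

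The main obstacle is the forward direction $(\Rightarrow)$, which is the deep content of Minty's theorem: from maximal monotonicity of $A$ alone one must produce, for each prescribed $x\in\Hi$, a point $z$ with $x\in z+A(z)$. A direct attack is not viable from what has been introduced so far, since the usual proofs invoke external convex-analytic machinery (for instance, the Fitzpatrick function, or a Galerkin-style reduction to finite dimensions followed by a weak-limit argument). For this reason my plan is to cite Minty's original work~\cite{minty} for the surjectivity half, rather than reproduce it, since it falls outside the scope of the present paper.
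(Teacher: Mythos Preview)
Your argument is correct: the single-valuedness in (i) and the sufficiency direction in (ii) are handled cleanly, and your decision to cite \cite{minty} for the surjectivity half is entirely appropriate. Note, however, that the paper does not supply a proof of this lemma at all---it is stated with a bare citation to \cite{minty}---so there is nothing to compare against; your proposal actually provides more detail than the paper does, and what you have written is sound.
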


Let us now turn our attention to single-valued operators. Let $B:\Hi\to\Hi$  be a linear operator. We say that $B$ is \emph{bounded} if there exists some $\kappa>0$ such that  $\norm{B(x)}\leq \kappa\norm{x}$, for all $x\in \Hi$. We denote by $B^\ast$ the \emph{adjoint} of $B$, i.e., the linear operator $B^\ast:\Hi\to \Hi$ such that $\langle B(x),y\rangle=\langle x,B^\ast (y)\rangle$ for all $x,y\in \Hi$.

\begin{definition}\label{def: cocoercive lipschitz}
    Let $B:\Hi\to \Hi$ be a single-valued operator and let $\beta,L>0$.
    \begin{enumerate}[label=(\roman*)]
        \item $B$ is $\beta$\emph{-cocoercive} if $\langle B(x)-B(y),x-y\rangle\geq\beta\norm{B(x)-B(y)}^2$ for all $x,y\in \Hi$.
        \item $B$ is $L$\emph{-Lipschitz continuous} if $\norm{B(x)-B(y)}\leq L\norm{x-y}$ for all $x,y\in \Hi$.
    \end{enumerate}
\end{definition}
    We simply say that an operator is cocoercive or Lipschitz continuous when it is not necessary to specify the constants.%
Clearly, every $\beta$-cocoercive operator is $\frac{1}{\beta}$-Lipschitz continuous by the Cauchy--Schwarz inequality, although $\frac{1}{\beta}$ might not be the best Lipschitz constant (see {\cite[Remark 4.15]{bauschke}}).

\begin{definition}\label{def: self-adjoint orthogonal PSD}
    Let $B:\Hi\to \Hi$ be a linear operator.
    \begin{enumerate}[label=(\roman*)]
        \item $B$ is \emph{self-adjoint} if $B=B^\ast$.
        \item $B$ is \emph{orthogonal} if $B$ is an isomorphism and $B^{-1}=B^\ast$.
        \item $B$ is \emph{positive semidefinite} if $\langle B(x),x\rangle\geq0$ for all $x\in \Hi$.
    \end{enumerate}
\end{definition}

Finally, we will make use of the following construction in our developments.

\begin{definition}\label{def: parallel composition}
    Let $A:\Hi\rightrightarrows \Hi$ and let $L:\Hi\to \Hi$ be linear. The \emph{parallel composition} of $A$ and $L$ is the set-valued operator
    \[L\rhd A:=(LA^{-1}L^\ast)^{-1}.\]
\end{definition}

\subsection{Preconditioned proximal point algorithms}\label{sec:PPPA}

A wide family of optimization methods are designed to solve inclusion problems of set-valued operators. That is to say, given a set-valued operator $A:\Hi\rightrightarrows \Hi$ , we are interested in the following problem:
\begin{equation}\label{eq: inclusion problem}
    \text{find }x\in \Hi\text{ such that }0\in A(x).
\end{equation}
One of the most popular algorithms to tackle this problem is the \emph{proximal point algorithm}, which is based on transforming inclusion~\eqref{eq: inclusion problem} into a fixed-point problem as follows. Given any $\gamma>0$, it holds
\begin{equation}\label{eq: inclusion to fixed point}
    0\in A(x)\Leftrightarrow x\in \gamma A(x)+x=(\gamma A+\operatorname{Id}_\Hi)(x)\Leftrightarrow x\in J_{\gamma A}(x),
\end{equation}
where $\operatorname{Id}_\Hi$ denotes the identity mapping on $\Hi$. To construct a uniquely determined fixed point iteration from~\eqref{eq: inclusion to fixed point}, the resolvent must be a single-valued operator with full domain which, by Lemma~\ref{lem: single-valued full domain resolvent}, is the same as requiring $A$ to be maximally monotone. In this way, the proximal point algorithm is defined by the iterative scheme
\begin{equation}\label{eq: proximal point}
    x^{k+1}=J_{\gamma A}(x^k),\quad k=0,1,2,\ldots,
\end{equation}
where the parameter $\gamma>0$ is referred to as the \emph{stepsize}.

If $A$ is simple enough to have a computable resolvent, this method suffices to obtain a good approximation to the solution. Nevertheless, for a general set-valued operator, computing the resolvent might be as difficult as solving the original inclusion problem~\eqref{eq: inclusion problem}. To facilitate the computation of the resolvent, we could replace the identity operator $\operatorname{Id}_\Hi$ in \eqref{eq: inclusion to fixed point} by some other linear and bounded operator $M:\Hi\to \Hi$, giving rise to the following fixed-point problem:
\begin{equation}
    0\in A(x)\Leftrightarrow Mx\in A(x)+Mx=(A+M)(x)\Leftrightarrow x\in(A+M)^{-1}(Mx).
\end{equation}
Notice that by doing some simple algebraic manipulations we get that $(A+M)^{-1}M=J_{M^{-1}A}$, see~\cite[Eq.~(2.3)]{degenerate-ppp} for details. The method obtained in this way is called the  \emph{preconditioned proximal point algorithm} and is given by
\begin{equation}\label{eq: preconditioned proximal point}
    x^{k+1}=J_{M^{-1}A}(x^k),\quad k=0,1,2,\ldots.
\end{equation}
To make sense of of this equation, we need to ensure that the resolvent $J_{M^{-1}A}$ has full domain and is single-valued, hence motivating the following definition.
\begin{definition}\label{def: admissible preconditioner}
    Let $A:\Hi\rightrightarrows \Hi$ be a set-valued operator and $M:\Hi\to \Hi$ be a linear, bounded, self-adjoint and positive semidefinite operator. We say that $M$ is an \emph{admissible preconditioner} for $A$ if
    \[J_{M^{-1}A}\text{ is single-valued and has full domain.}\]
\end{definition}

It is convenient to work with a generalized form of \eqref{eq: preconditioned proximal point} where certain parameters are allowed. Specifically, given a sequence of relaxation parameters $\{\theta_k\}_{k=0}^{\infty}$ such that
\begin{equation}\label{def: relaxation paramter}
\theta_k\in{]0,2]} \text{ for all } k\in\mathbb{N} \quad\text{and}\quad \sum_{k\in\mathbb{N}}\theta_k(2-\theta_k)=+\infty,
\end{equation}
a generalized form of \eqref{eq: preconditioned proximal point}, which is called the \emph{relaxed preconditioned proximal point algorithm}, is given by
\begin{equation}\label{eq: relaxed preconditioned proximal point}
    x^{k+1}=x^k+\theta_k(J_{M^{-1}A}(x^k)-x^k),\quad k=0,1,2,\ldots.
\end{equation}

Since the admissible preconditioner $M$ is self-adjoint and positive semidefinite, by~\cite[Proposition 2.3]{degenerate-ppp}, it can be split as $M=CC^\ast$ for some injective operator $C$. When $\operatorname{ran}M$ is closed, this factorization is called an \emph{onto decomposition} of $M$. Such factorization is unique up to orthogonal transformations, see~\cite[Proposition~2.2]{graph-drs}. As we subsequently detail, this allows  rewriting~\eqref{eq: relaxed preconditioned proximal point} in terms of the resolvent of the parallel composition $C^\ast\rhd A$.

\begin{lemma}[{\cite[Theorem 2.13]{degenerate-ppp}}]\label{lem: parallel resolvent}
    Let $A:\Hi\rightrightarrows \Hi$, let $M$ be an admissible preconditioner for $A$ and let $M=CC^\ast$ be an onto decomposition. Then $C^\ast \rhd A$ is maximally monotone and
    \[J_{C^\ast\rhd A}=C^\ast(M+A)^{-1}C.\]
\end{lemma}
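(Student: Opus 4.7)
The plan is to split the argument into two parts: derive the closed-form expression for $J_{C^*\rhd A}$ by pure algebraic unfolding, and then use this formula together with Minty's characterization from \Cref{lem: single-valued full domain resolvent} to conclude that $C^*\rhd A$ is maximally monotone.

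For the formula, I would fix $y$ and systematically unfold the parallel composition: $x \in J_{C^*\rhd A}(y)$ is equivalent to $y - x \in (C^*A^{-1}C)^{-1}(x)$, which is in turn equivalent to $x \in C^*A^{-1}C(y-x)$, and by the definition of composition this means there is some $u$ with $x = C^*u$ and $C(y-x) \in A(u)$. Substituting $x = C^*u$ turns the last inclusion into $Cy - CC^*u \in A(u)$, i.e., $Cy \in (M+A)(u)$, yielding $u \in (M+A)^{-1}(Cy)$ and therefore $x \in C^*(M+A)^{-1}(Cy)$. All implications reverse, which gives the claimed identity as an equality of sets.

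Next I would check that the right-hand side defines a single-valued operator with full domain, and also that $C^*\rhd A$ is monotone. For single-valuedness, given $u_1, u_2 \in (M+A)^{-1}(Cy)$, the inclusions $Cy - Mu_i \in A(u_i)$ combined with monotonicity of $A$ yield $\langle u_1 - u_2, M(u_1 - u_2)\rangle \leq 0$; since $\langle u, Mu\rangle = \norm{C^*u}^2$, this forces $C^*u_1 = C^*u_2$, so $C^*(M+A)^{-1}(Cy)$ is a singleton whenever non-empty. For full domain, admissibility of $M$ gives $\ran M \subseteq \ran(M+A)$, and the onto-decomposition hypothesis together with the injectivity of $C$ gives $\ran C = \ran CC^* = \ran M$, so every $Cy$ lies in $\ran(M+A)$ and the preimage is non-empty. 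A parallel unfolding establishes monotonicity of $C^*\rhd A$: if $(x_i, u_i) \in \gra(C^*\rhd A)$ then there exist $w_i$ with $x_i = C^*w_i$ and $Cu_i \in A(w_i)$, and monotonicity of $A$ transfers as $\langle x_1 - x_2, u_1 - u_2\rangle = \langle w_1 - w_2, C(u_1 - u_2)\rangle \geq 0$. Combined with the full-domain property already obtained, \Cref{lem: single-valued full domain resolvent}(ii) upgrades monotonicity to maximal monotonicity.

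The main obstacle I anticipate is handling the possible non-invertibility of $M$, so that the formal manipulation $(A+M)^{-1}M = J_{M^{-1}A}$ recalled in the preliminaries cannot be literally inverted. The onto decomposition is exactly the hypothesis that reconciles this: the injectivity of $C$ together with the closedness of $\ran M$ force $C^*$ to be surjective and hence $\ran C = \ran CC^* = \ran M$, so the admissibility of $M$, which a priori only controls preimages of points in $\ran M$, is precisely strong enough to cover every preimage $(M+A)^{-1}(Cy)$ required by the formula.
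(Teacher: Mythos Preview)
The paper does not prove this lemma; it is imported verbatim from an external source, so there is no in-paper argument to compare against. Your proposal is sound and follows the natural route: unfold the parallel composition to obtain the set identity, then verify the hypotheses of \Cref{lem: single-valued full domain resolvent}.

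Two remarks. First, your appeal to monotonicity of $A$ for single-valuedness is a detour: admissibility already gives that $J_{M^{-1}A}(z)=(M+A)^{-1}(Mz)$ is a singleton for every $z$, so $(M+A)^{-1}$ is single-valued on $\ran M$; combined with your correct observation that $\ran C=\ran M$, the set $(M+A)^{-1}(Cy)$ is itself a singleton, not merely its image under $C^*$. Second, monotonicity of $A$ \emph{is} genuinely needed where you use it to show $C^*\rhd A$ monotone, and you handle that transfer correctly. The lemma as stated here omits this hypothesis, but it cannot be dropped: for $\Hi=\R$, $A=-2\Id_\Hi$, $M=C=\Id_\Hi$, the preconditioner is admissible (indeed $J_{M^{-1}A}=-\Id_\Hi$), yet $C^*\rhd A=-2\Id_\Hi$ is not monotone. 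So your tacit addition of the hypothesis is appropriate and presumably reflects a standing assumption in the cited source.
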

Thanks to this lemma, we can bring \eqref{eq: relaxed preconditioned proximal point} back and modify it to rewrite it using the resolvent of the parallel composition $J_{C^\ast \rhd A}$. To do this, recall that $J_{M^{-1}A}=(M+A)^{-1}M=(M+A)^{-1}CC^\ast$. Introducing the new variable $y^k:=C^\ast x^k$, we can write \eqref{eq: relaxed preconditioned proximal point} as
\begin{equation}\label{eq: rPPP yk}
    x^{k+1}=x^k+\theta_k((M+A)^{-1}(Cy^k)-x^k).
\end{equation}
Now, operating by $C^\ast$ on both sides of equation~\eqref{eq: rPPP yk}, we obtain
\begin{equation}\label{eq: rPPP C*x}
    C^\ast x^{k+1}=C^\ast x^k+\theta_k(C^\ast(M+A)^{-1}(Cy^k)+C^\ast x^k).
\end{equation}
Using Lemma~\ref{lem: parallel resolvent} and the definition of $y^k$ in equation~\eqref{eq: rPPP C*x}, we obtain the so-called \emph{reduced preconditioned proximal point algorithm}, which is given by
\begin{equation}\tag{RPPA}\label{eq: reduced preconditioned proximal point}
    y^{k+1}=y^k+\theta_k(J_{C^\ast\rhd A}(y^k)-y^k),\quad k=0,1,2,\ldots.
\end{equation}

\begin{theorem}[{\cite[Theorem 2.5]{graph-drs}}]\label{th: RPPP convergence}
    Let $A:\Hi\rightrightarrows \Hi$ be maximally monotone and suppose that $\zer A\neq\varnothing$. Let $M$ be and admissible preconditioner for $A$ and let $M=CC^\ast$ be an onto decomposition with $C:\Hi^\prime\to \Hi$. Pick any $y^0\in \Hi^\prime$ and iteratively define the sequence $\{y^k\}_{k=0}^\infty$ by~\eqref{eq: reduced preconditioned proximal point}. Then, the following assertions hold.
    \begin{enumerate}[label=(\roman*)]
        \item There exists some $y^\ast\in \Hi^\prime$ such that $y^k\wto y^\ast$ and $u^\ast:=(M+A)^{-1}(Cy^\ast)\in\zer A$.
        \item If $(M+A)^{-1}$ is Lipschitz, then $(M+A)^{-1}(Cy^k)\wto u^\ast$.
    \end{enumerate}
\end{theorem}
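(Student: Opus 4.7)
The plan is to recognize \eqref{eq: reduced preconditioned proximal point} as the relaxed proximal point iteration for the auxiliary operator $T := C^\ast \rhd A$ acting on $\Hi^\prime$. By Lemma~\ref{lem: parallel resolvent}, $T$ is maximally monotone and $J_T = C^\ast(M+A)^{-1}C$, so the scheme reads $y^{k+1} = y^k + \theta_k(J_T(y^k) - y^k)$. First I would verify that $\zer T \neq \varnothing$: if $u^\ast \in \zer A$, then $0 \in A(u^\ast)$ gives $u^\ast \in A^{-1}(C\cdot 0)$, whence $C^\ast u^\ast \in (C^\ast A^{-1} C)(0) = T^{-1}(0)$ and therefore $C^\ast(\zer A) \subseteq \zer T$.

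With $T$ maximally monotone, $J_T$ firmly nonexpansive, $\zer T \neq \varnothing$, and the relaxation sequence satisfying~\eqref{def: relaxation paramter}, a standard weak-convergence result for Krasnosel'skii--Mann-type iterations yields $y^k \rightharpoonup y^\ast$ for some $y^\ast \in \Fix J_T = \zer T$, together with the residual estimate $J_T(y^k) - y^k \to 0$ in norm. To finish part~(i), set $u^\ast := (M+A)^{-1}(Cy^\ast)$. The identity $y^\ast = J_T(y^\ast) = C^\ast u^\ast$, combined with $Cy^\ast \in (M+A)(u^\ast)$ (from the definition of the resolvent) and $Mu^\ast = CC^\ast u^\ast = Cy^\ast$, immediately delivers $0 \in A(u^\ast)$, i.e., $u^\ast \in \zer A$.

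For part~(ii), set $u^k := (M+A)^{-1}(Cy^k)$. The residual estimate gives $C^\ast u^k - y^k \to 0$ strongly, which together with $y^k \rightharpoonup y^\ast$ yields $C^\ast u^k \rightharpoonup C^\ast u^\ast$. The Lipschitz hypothesis provides the bound $\|u^k - u^\ast\| \leq L\|C\|\,\|y^k - y^\ast\|$, so $\{u^k\}$ is bounded in $\Hi$. To upgrade boundedness to $u^k \rightharpoonup u^\ast$, I would identify every weak cluster point $\bar u$ of $\{u^k\}$ with $u^\ast$ by combining $C^\ast \bar u = C^\ast u^\ast$ with a passage to the weak limit in the inclusion $Cy^{k_j} - Mu^{k_j} \in A(u^{k_j})$, using monotonicity of $A$ together with weak/strong demiclosedness to rule out any cluster point other than $u^\ast$. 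The main obstacle I expect is exactly this last identification step, because nonlinear Lipschitz maps are not automatically weak-to-weak continuous, so the hypothesis must be leveraged together with the structural identity $Mu^\ast = Cy^\ast$ and the possible degeneracy $\ker M \neq \{0\}$ in order to pin down cluster points uniquely.
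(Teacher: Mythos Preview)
The paper does not supply its own proof of this theorem: it is quoted verbatim from \cite[Theorem~2.5]{graph-drs} and used as a black box. So there is nothing in the present paper to compare your argument against, and I can only comment on the soundness of your plan.

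Your argument for part~(i) is correct and is, in fact, the standard one: recognise \eqref{eq: reduced preconditioned proximal point} as a Krasnosel'ski\u{\i}--Mann iteration for the firmly nonexpansive map $J_T$ with $T=C^\ast\rhd A$, verify $\zer T\supseteq C^\ast(\zer A)\neq\varnothing$, invoke weak convergence $y^k\rightharpoonup y^\ast\in\Fix J_T$, and then unwind the identity $Cy^\ast=CC^\ast u^\ast=Mu^\ast$ to obtain $0\in A(u^\ast)$.

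For part~(ii) your outline contains the right ingredients but you stop short of closing the loop, and you flag this yourself. Here is how the identification step actually goes through, so you can see that no obstacle remains. From the KM residual you have $J_T(y^k)-y^k=C^\ast u^k-y^k\to 0$ \emph{strongly}; hence
\[
Cy^k-Mu^k=C\bigl(y^k-C^\ast u^k\bigr)\to 0 \quad\text{strongly in }\Hi.
\]
Since $Cy^k-Mu^k\in A(u^k)$ and $A$ has weak--strong closed graph, every weak cluster point $\bar u$ of $\{u^k\}$ satisfies $0\in A(\bar u)$. Moreover $C^\ast\bar u=y^\ast=C^\ast u^\ast$, so $M\bar u=Mu^\ast=Cy^\ast$. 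Now use the \emph{single-valuedness} of $(M+A)^{-1}$ (implied by the Lipschitz hypothesis): from $0\in A(\bar u)$ one gets $M\bar u\in(M+A)(\bar u)$, whence
\[
\bar u=(M+A)^{-1}(M\bar u)=(M+A)^{-1}(Cy^\ast)=u^\ast.
\]
Thus all weak cluster points coincide with $u^\ast$, and boundedness gives $u^k\rightharpoonup u^\ast$. Your plan was pointing in this direction; the missing observation is that $Cy^k-Mu^k\to 0$ strongly (not merely weakly), which is what makes the demiclosedness argument go through, together with the use of single-valuedness of $(M+A)^{-1}$ rather than its Lipschitz modulus per~se.
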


\subsection{Graph theory}

We say that $G=(\mathcal{N},\mathcal{E})$ is a \emph{(directed) graph} if $\mathcal{N}$ is a finite set and $\mathcal{E}\subseteq \mathcal{N}\times \mathcal{N}$. The elements of $\mathcal{N}$ are known as \emph{nodes} and the elements of $\mathcal{E}$ are called \emph{edges}. The \emph{order} of the graph is the number of nodes $|\mathcal{N}|$. Notice also that, since $\mathcal{N}$ is finite, we can relabel its nodes as $\mathcal{N}=\{1,\ldots,n\}$ for some $n\geq 1$. A graph can be depicted as dots representing the nodes and arrows connecting one node to another, representing the edges.

\begin{definition}\label{def: degree matrix}
    Let $G=(\mathcal{N},\mathcal{E})$ be a graph of order $n$.
    \begin{enumerate}[label=(\roman*)]
        \item A \emph{subgraph} of $G$ is a graph $G^\prime=(\mathcal{N}^\prime,\mathcal{E}^\prime)$ such that $\mathcal{N}\subseteq \mathcal{N}^\prime$ and $\mathcal{E}\subseteq\mathcal{E}^\prime$. By abuse of notation, we will write $G^\prime\subseteq G$. We say that $G^\prime\subseteq G$ is a \emph{spanning subgraph} if $\mathcal{N}^\prime=\mathcal{N}$.

        \item A node $j\in\mathcal{N}$ is said to be \emph{adjacent} to a node $i\in\mathcal{N}$ if $(i,j)\in\mathcal{E}$. The \emph{adjacency matrix} of $G$ is the matrix $\operatorname{Adj}(G)\in\mathbb{R}^{n\times n}$ defined componentwise as
        \[\operatorname{Adj}(G)_{ij}:=\begin{cases}
            1 & \text{if }(i,j)\in\mathcal{E},\\
            0 & \text{otherwise}.
        \end{cases}\]

        \item The \emph{in-degree} and \emph{out-degree} of a node $i\in\mathcal{N}$ are
        defined as $d_i^{\rm in}:=|\{j\in\mathcal{N}:(j,i)\in\mathcal{E}\}|$ and
        $d_i^{\rm out}:=|\{j\in\mathcal{N}:(i,j)\in\mathcal{E}\}|$, respectively.
        The \emph{degree} is the sum of both numbers and is denoted by $d_i:=d_i^{\rm in}+d_i^{\rm out}$.
        The \emph{degree matrix} of $G$ is the diagonal matrix
        \[\operatorname{Deg}(G):=\operatorname{diag}(d_1,\ldots,d_n)\in\mathbb{R}^{n\times n}.\]

        \item A \emph{(weak) path} in $G$ is a finite sequence of all distinct nodes $(v_1,\ldots,v_r)$ with $r\geq2$ such that $(v_k,v_{k+1})\in\mathcal{E}$ or $(v_{k+1},v_k)\in\mathcal{E}$ for all $k=1,\ldots,r-1$. In this context, we say that $v_1$ and $v_r$ are the \emph{endpoints} of the path.

        \item Two distinct nodes $i,j\in\mathcal{N}$ are \emph{connected} if there exists a path with $i$ and $j$ as endpoints. A graph $G$ is \emph{connected} if every pair of nodes are connected.
    \end{enumerate}
\end{definition}

Since $\mathcal{E}\subseteq \mathcal{N}\times\mathcal{N}$, the following special situations may occur: (i) if there is a node $i\in\mathcal{N}$ such that $(i,i)\in\mathcal{E}$, then it forms a \emph{loop}; (ii) if there are two nodes $\{i,j\}\subseteq\mathcal{N}$ such that $(i,j),(j,i)\in\mathcal{E}$, then they form a \emph{2-cycle}.

\begin{definition}\label{def: oriented tree}
    Let $G=(\mathcal{N},\mathcal{E})$ be a graph. Then $G$ is:
    \begin{enumerate}[label=(\roman*)]
        \item \emph{oriented} if it contains no loops and no 2-cycles;
        \item a \emph{tree} if it is connected and $|\mathcal{E}|=|\mathcal{N}|-1$.
    \end{enumerate}
\end{definition}

One can easily check that every tree is oriented. On other other hand, not every oriented graph is connected.

Notice that, since the number of nodes of a graph is finite, so is the number of edges. Hence, we can also index them with natural numbers as $\mathcal{E}=\{1,\ldots,E\}$, which is useful for defining the following matrices.

\begin{definition}\label{def: incidence matrix}
    Let $G=(\mathcal{N},\mathcal{E})$ be an oriented graph of order $n$ and let $E:=|\mathcal{E}|$.
    \begin{enumerate}[label=(\roman*)]
    \item The \emph{incidence matrix} of $G$ is the matrix $\operatorname{Inc}(G)\in\mathbb{R}^{n\times E}$ defined componentwise as
    \[\operatorname{Inc}(G)_{ie}:=\begin{cases}
        1 & \text{if the edge }e\text{ leaves the node }i,\\
        -1 & \text{if the edge }e\text{ enters the node }i,\\
        0 & \text{otherwise}.
    \end{cases}\]
    \item The \emph{Laplacian matrix} is the symmetric and positive semidefinite matrix defined as  $\operatorname{Lap}(G):=\operatorname{Inc}(G)\operatorname{Inc}(G)^\ast\in\mathbb{R}^{n\times n}$, which can be described componentwise as
    \[\operatorname{Lap}(G)_{ij}:=\begin{cases}
        d_i & \text{if }i=j,\\
        -1 & \text{if }(i,j)\in\mathcal{E}\text{ or }(j,i)\in\mathcal{E},\\
        0 & \text{otherwise.}
    \end{cases}\]
    Thus, $\operatorname{Lap}(G)=\operatorname{Deg}(G)-\operatorname{Adj}(G)-\operatorname{Adj}(G)^\ast$.
    \end{enumerate}
\end{definition}

The next results collect the main properties of the incidence and Laplacian matrix that we employ.

\begin{lemma}[{\cite[Theorems 8.3.1 and 13.1.1]{godsil}}]\label{lem: rank incidence}
    If $G$ is a connected oriented graph of order $n$, then $\rk(\operatorname{Inc}(G))=n-1$. Hence, $\rk(\operatorname{Lap}(G))=n-1$ and $\ker(\operatorname{Lap}(G))=\operatorname{span}\{\mathbf{1}\}$, where $\mathbf{1}=(1,\ldots,1)\in\R^n$.
\end{lemma}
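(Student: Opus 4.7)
The plan is to first analyze $\operatorname{Inc}(G)$ directly and then deduce both claims about $\operatorname{Lap}(G)$ from the standard factorization $\operatorname{Lap}(G)=\operatorname{Inc}(G)\operatorname{Inc}(G)^\ast$.

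First I would observe that, since $G$ is oriented (no loops), every column of $\operatorname{Inc}(G)$ contains exactly one entry equal to $+1$, exactly one entry equal to $-1$, and zeros elsewhere. Summing the rows therefore yields $\mathbf{1}^\ast\operatorname{Inc}(G)=0$, i.e.\ $\mathbf{1}\in\ker(\operatorname{Inc}(G)^\ast)$. This immediately gives the upper bound $\rk(\operatorname{Inc}(G))\le n-1$.

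For the matching lower bound I would use the connectivity of $G$ as follows. Take any $\mathbf{v}=(v_1,\ldots,v_n)\in\ker(\operatorname{Inc}(G)^\ast)$. For each edge $e\in\mathcal{E}$, say from node $i$ to node $j$, the $e$-th component of $\operatorname{Inc}(G)^\ast\mathbf{v}$ is $v_i-v_j$, so the relation $\operatorname{Inc}(G)^\ast\mathbf{v}=0$ forces $v_i=v_j$ for every $(i,j)\in\mathcal{E}$. Given any two nodes $p,q\in\mathcal{N}$, connectedness yields a path $(p=w_1,w_2,\ldots,w_r=q)$; applying the previous equality along each consecutive pair (in whichever direction the edge is oriented) chains $v_p=v_{w_2}=\cdots=v_q$. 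Hence all entries of $\mathbf{v}$ coincide, so $\ker(\operatorname{Inc}(G)^\ast)=\operatorname{span}\{\mathbf{1}\}$, which is one-dimensional. By the rank--nullity theorem applied to $\operatorname{Inc}(G)^\ast\in\R^{E\times n}$, this gives $\rk(\operatorname{Inc}(G))=\rk(\operatorname{Inc}(G)^\ast)=n-1$.

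Finally, for the Laplacian I would invoke the general linear-algebra identity $\rk(AA^\ast)=\rk(A)$ (equivalently, $\ker(AA^\ast)=\ker(A^\ast)$, which follows at once from $\langle AA^\ast x,x\rangle=\|A^\ast x\|^2$). With $A=\operatorname{Inc}(G)$ this gives both $\rk(\operatorname{Lap}(G))=n-1$ and $\ker(\operatorname{Lap}(G))=\ker(\operatorname{Inc}(G)^\ast)=\operatorname{span}\{\mathbf{1}\}$, as desired.

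The only genuinely substantive step is the connectivity argument producing the lower bound on the rank; everything else is a bookkeeping consequence of the $\pm 1$ structure of the columns of $\operatorname{Inc}(G)$ and of the standard identity relating the rank of a matrix to that of its Gram matrix. I do not anticipate any real obstacle beyond carefully handling the fact that paths in Definition~\ref{def: degree matrix}(iv) are \emph{weak}, which is precisely why the equality $v_i=v_j$ along each edge propagates regardless of its orientation.
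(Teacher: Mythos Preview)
Your proof is correct and is precisely the standard argument for this classical fact. Note, however, that the paper does not actually supply a proof of this lemma: it is stated with a citation to \cite[Theorems~8.3.1 and 13.1.1]{godsil} and left unproved. So there is nothing in the paper to compare against beyond the reference itself; your self-contained argument simply fills in what the paper delegates to the literature.
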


\begin{proposition}\label{prop: Zdecom}
Let $G$ be a connected oriented graph of order $n$. Then, there exists a matrix $Z\in\mathbb{R}^{n\times (n-1)}$ such that
\begin{equation}\label{eq:Zdecom}
\operatorname{Lap}(G)=ZZ^\ast.
\end{equation}
Consequently, $\rk Z = n-1$ and $\ker Z^\ast=\operatorname{span}\{\mathbf{1}\}$. In particular, if $\operatorname{Lap}(G)=Q\Lambda Q^\ast$ is a spectral decomposition of $\operatorname{Lap}(G)$, where $\Lambda=\operatorname{diag}(\lambda_1,\ldots,\lambda_{n-1},0)$ is the diagonal matrix of eigenvalues with $\lambda_1\geq\ldots\geq\lambda_{n-1}\geq 0$ and the columns of $Q=[v_1~v_2~\cdots~v_n]\in\R^{n\times n}$ correspond to eigenvectors, an onto decomposition of $\operatorname{Lap}(G)$ is given by
$$Z:=[v_1~v_2~\cdots~v_{n-1}]\operatorname{diag}(\sqrt{\lambda_1},\ldots,\sqrt{\lambda_{n-1}}).$$
\end{proposition}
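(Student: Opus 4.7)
The plan is to build $Z$ directly from a spectral decomposition of $\operatorname{Lap}(G)$ and then check the three assertions (factorization, rank, kernel) by straightforward linear algebra, using Lemma~\ref{lem: rank incidence} to pin down the rank and the null space of $\operatorname{Lap}(G)$ beforehand.

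First I would note that $\operatorname{Lap}(G)=\operatorname{Inc}(G)\operatorname{Inc}(G)^\ast$ is real symmetric and positive semidefinite, so the spectral theorem gives an orthogonal matrix $Q=[v_1~\cdots~v_n]$ and a diagonal matrix $\Lambda=\operatorname{diag}(\lambda_1,\ldots,\lambda_n)$ of nonnegative eigenvalues (ordered $\lambda_1\geq\cdots\geq\lambda_n\geq 0$) with $\operatorname{Lap}(G)=Q\Lambda Q^\ast$. By Lemma~\ref{lem: rank incidence}, $\rk(\operatorname{Lap}(G))=n-1$ and $\ker(\operatorname{Lap}(G))=\operatorname{span}\{\mathbf{1}\}$, hence exactly one eigenvalue vanishes, which must be $\lambda_n$, and the corresponding eigenvector $v_n$ can be chosen proportional to $\mathbf{1}$; the remaining eigenvalues satisfy $\lambda_1\geq\cdots\geq\lambda_{n-1}>0$.

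Next I would set $Z:=[v_1~\cdots~v_{n-1}]\operatorname{diag}(\sqrt{\lambda_1},\ldots,\sqrt{\lambda_{n-1}})\in\R^{n\times(n-1)}$ and verify~\eqref{eq:Zdecom} by the direct computation
\[
ZZ^\ast=\sum_{i=1}^{n-1}\lambda_i v_iv_i^\ast=\sum_{i=1}^{n}\lambda_i v_iv_i^\ast=Q\Lambda Q^\ast=\operatorname{Lap}(G),
\]
where the second equality uses $\lambda_n=0$. The columns of $Z$ are pairwise orthogonal (since the $v_i$ are) and nonzero (since $\lambda_i>0$ for $i\leq n-1$), so they are linearly independent, giving $\rk Z=n-1$. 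For the kernel of $Z^\ast$, note that $Z^\ast x=0$ is equivalent to $\sqrt{\lambda_i}\,\langle v_i,x\rangle=0$ for all $i\in\llbracket 1,n-1\rrbracket$, i.e., $x\perp v_i$ for these indices; by orthonormality of $Q$, this forces $x\in\operatorname{span}\{v_n\}=\operatorname{span}\{\mathbf{1}\}$, and the converse inclusion is immediate.

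No real obstacle is expected here: the statement is essentially the standard reduction of a positive semidefinite matrix of corank one to a rank-preserving square-root factor, and the only graph-theoretic input needed is the identification of $\ker(\operatorname{Lap}(G))$ already supplied by Lemma~\ref{lem: rank incidence}. The mildest care is in fixing the ordering of eigenvalues so that the zero eigenvalue is $\lambda_n$ and in choosing $v_n\parallel\mathbf{1}$, which is permitted by the freedom in the spectral decomposition.
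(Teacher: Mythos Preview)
Your proof is correct and follows essentially the same approach as the paper: both rely on Lemma~\ref{lem: rank incidence} for the rank and kernel of $\operatorname{Lap}(G)$, together with the spectral construction already spelled out in the statement. The paper's proof is a single sentence declaring this to be ``a direct consequence of the definition of $\operatorname{Lap}(G)$ and Lemma~\ref{lem: rank incidence}'', whereas you have simply written out the details (verifying $ZZ^\ast=\operatorname{Lap}(G)$, the rank, and the kernel) explicitly.
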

\begin{proof}
This is a direct consequence of the definition of $\operatorname{Lap}(G)$ and Lemma~\ref{lem: rank incidence}.
\end{proof}

\begin{remark}\label{rem: tree incidence}
    The previous proposition gives us a constructive approach to produce an onto decomposition of the Laplacian matrix by means of its eigendecomposition. Nonetheless, if $G$ is a tree, we can directly take the incidence matrix of $G$ as $Z$ in \eqref{eq:Zdecom}. Indeed, by definition of the Laplacian, we have that $\operatorname{Lap}(G)=\operatorname{Inc}(G)\operatorname{Inc}(G)^\ast$. Since $G$ is a tree, it has $n-1$ edges and then  $\operatorname{Inc}(G)\in\mathbb{R}^{n\times (n-1)}$.
\end{remark}

\section{Algorithm framework}\label{sec:operators}

In this section, we present some suitable settings for the underlying graphs that give rise to a family of frugal forward-backward splitting algorithms for solving problem \eqref{eq:Prob} when $m=n-1$. As in~\cite{graph-drs}, the algorithms are devised as an application of~\eqref{eq: reduced preconditioned proximal point} to some ad hoc operators acting in a product Hilbert space. More precisely, we define a set-valued operator $\mathcal{A}$ based on the maximally monotone operators, a single-valued operator $\mathcal{B}$ based on the cocoercive ones and an admissible preconditioner $\mathcal{M}$ for the maximally monotone operator $\mathcal{A}+\mathcal{B}$. Further, this is done in such a way that it is straightforward to derive solutions to the original inclusion problem from the set of zeros of $\mathcal{A}+\mathcal{B}$.

\subsection{Graph settings}

As mentioned above, the operators $\mathcal{A}$, $\mathcal{B}$ and $\mathcal{M}$ are defined based on the underlying graph structure. Hence, we must impose some properties to the graph in order to depict which variables are needed to evaluate each resolvent variable.

\begin{definition}\label{def: algorithmic graph}
    We say that $G=(\mathcal{N},\mathcal{E})$ is an \emph{algorithmic graph} if
    \begin{enumerate}[label=(\roman*)]
        \item $\mathcal{N}=\{1,\ldots,n\}$ with $n\geq2$,
        \item $(i,j)\in\mathcal{E}\Rightarrow i<j$, and
        \item $G$ is connected.
    \end{enumerate}
\end{definition}

Observe that, by definition, every algorithmic graph is oriented. Thus, the adjacency, incidence and Laplacian matrices for these graphs are well-defined. Let us present some examples of algorithmic graphs.

\begin{example}[Sequential graph]\label{ex: sequential graph}
    For every $n\geq2$, there is a unique algorithmic graph of order $n$ which is a path (and hence, a tree), and we refer to it as \emph{sequential}. The degrees of the nodes are $d_1=d_n=1$, while $d_i=2$ for all $i=2,\ldots,n-1$. It is represented in Figure~\ref{fig: sequential graph}.
\end{example}

\begin{example}[Ring graph]\label{ex: ring graph}
    For every $n\geq2$, the \emph{ring} graph is a cycle of order $n$ whose edges are always forwardly directed. The degrees of the nodes are $d_i=2$ for all $i\in\mathcal{N}$. It is depicted in Figure~\ref{fig: ring}.
\end{example}

\begin{example}[Parallel graph]\label{ex: parallel graph}
    The graphs with edges given by $\mathcal{E}_u=\{(1,j): j=2,\ldots,n\}$ or $\mathcal{E}_d=\{(i,n): i=1,\ldots,n-1\}$ are algorithmic trees. Both have the same underlying graph structure, namely, a star graph. In this setting, there is a node with degree $n-1$, whereas the rest have degree 1. We refer to these types of algorithmic graphs as \emph{parallel}. Specifically, the graph with edges $\mathcal{E}_u$ is known as \emph{parallel up}, see Figure~\ref{fig: parallel up}, and the one with $\mathcal{E}_d$ is called \emph{parallel down}, see Figure~\ref{fig: parallel down}.
\end{example}

\begin{example}[Complete graph]\label{ex: complete graph}
    The graph of order $n$ given by $\mathcal{E}=\{(i,j): i<j\}$ is an algorithmic graph that is called \emph{complete}. The degree of every node is $d_i=n-1$ for all $i\in\mathcal{N}$. It is illustrate it in Figure~\ref{fig: complete graph}.
\end{example}

For $n=2$, there is a unique algorithmic graph whose nodes have degree $1$. For $n=3$, every algorithmic graph is either one of the examples presented above: complete, sequential, parallel up or parallel down (the ring and complete graphs coincide).

\begin{example}[Union of graphs]\label{ex: biparallel graph}
    Given two algorithmic graphs $G_1=(\mathcal{N},\mathcal{E}_1)$ and $G_2=(\mathcal{N},\mathcal{E}_2)$ of order $n$, we can construct an algorithmic graph by just taking the union $G=G_1\cup G_2$. Formally, it is a graph with the same set of nodes $\mathcal{N}$ and $\mathcal{E}:=\mathcal{E}_1\cup\mathcal{E}_2$. For example, the union of a parallel up and a parallel down of order $n$, which we call \emph{biparallel}, has the structure depicted in Figure~\ref{fig: biparallel graph}.
\end{example}

\newcommand\ancho{.48}
\newcommand\escala{.9}
\newcommand\espacio{\hspace{0.4cm}}
\vspace{-0.5cm}
\begin{figure}[ht!]
	\setlength{\abovecaptionskip}{5pt}
	\setlength{\belowcaptionskip}{0pt}
    \centering
    \begin{subfigure}{\ancho\textwidth}
    \begin{center}
    \adjustbox{scale=\escala,center}{%
        \begin{tikzcd}[cells={nodes={draw=black, circle}}]
        1 \arrow[r] & 2 \arrow[r] & 3 \arrow[r] & |[draw=none]|\cdots\vphantom{1} \arrow[r] & n\vphantom{1}
        \end{tikzcd}
        }
    \end{center}
    \caption{Sequential}\label{fig: sequential graph}
    \end{subfigure}
    \espacio
    \begin{subfigure}{\ancho\textwidth}\centering
    \adjustbox{scale=\escala,center}{%
        \begin{tikzcd}[cells={nodes={draw=black, circle}}]
        1 \arrow[rrrr, bend left=25] \arrow[r] & 2 \arrow[r] & 3 \arrow[r]  & |[draw=none]|\cdots\vphantom{1} \arrow[r] & n\vphantom{1}
        \end{tikzcd}
        }
         \vspace{0.27cm}
        	\caption{Ring}\label{fig: ring}
    \end{subfigure}\\
    \begin{subfigure}{\ancho\textwidth}\centering
    \adjustbox{scale=\escala,center}{%
            \begin{tikzcd}[cells={nodes={draw=black, circle}}]
                1 \arrow[rrrr, bend left=30] \arrow[rrr, bend left=30] \arrow[rr, bend left] \arrow[r]  & 2 & 3 & |[draw=none]|\cdots & n
                \end{tikzcd}
                }
                \vspace{-0.12cm}
            \caption{Parallel up}\label{fig: parallel up}
    \end{subfigure}
    \espacio
    \begin{subfigure}{\ancho\textwidth}\centering
    \adjustbox{scale=\escala,center}{%
            \begin{tikzcd}[cells={nodes={draw=black, circle}}]
                1 \arrow[rrrr, bend left=35] & 2 \arrow[rrr, bend left=25] & 3 \arrow[rr, bend left=15] & |[draw=none]|\cdots\vphantom{1} \arrow[r] & n\vphantom{1}
                \end{tikzcd}
                }
            \caption{Parallel down}\label{fig: parallel down}
    \end{subfigure}\\
    \begin{subfigure}{\ancho\textwidth}
    \begin{center}
    \adjustbox{scale=\escala,center}{%
        \begin{tikzcd}[cells={nodes={draw=black, circle}}]
            1 \arrow[rrr, bend left=49] \arrow[r] \arrow[rr, bend left] \arrow[rrrr, bend left=49] & 2 \arrow[r] \arrow[rr, bend right] \arrow[rrr, bend right] & 3 \arrow[r] \arrow[rr, bend left] & |[draw=none]|\cdots\vphantom{1} \arrow[r] & n\vphantom{1}
            \end{tikzcd}
            }
            \vspace{-0.13cm}
    \end{center}
    \caption{Complete}\label{fig: complete graph}
    \end{subfigure}
    \espacio
    \begin{subfigure}{\ancho\textwidth}\centering
        \adjustbox{scale=\escala,center}{%
    \begin{tikzcd}[cells={nodes={draw=black, circle}}]
    1 \arrow[rrrr, bend left=40] \arrow[rrr, bend left=35] \arrow[rr, bend left] \arrow[r, bend left] & 2 \arrow[rrr, bend right=40] & 3 \arrow[rr, bend right=30] & |[draw=none]|\cdots\vphantom{1} \arrow[r, bend right=20] & n
    \end{tikzcd}
    }
            \caption{Biparallel}\label{fig: biparallel graph}
    \end{subfigure}
    \caption{Examples of algorithmic graphs}
    \end{figure}
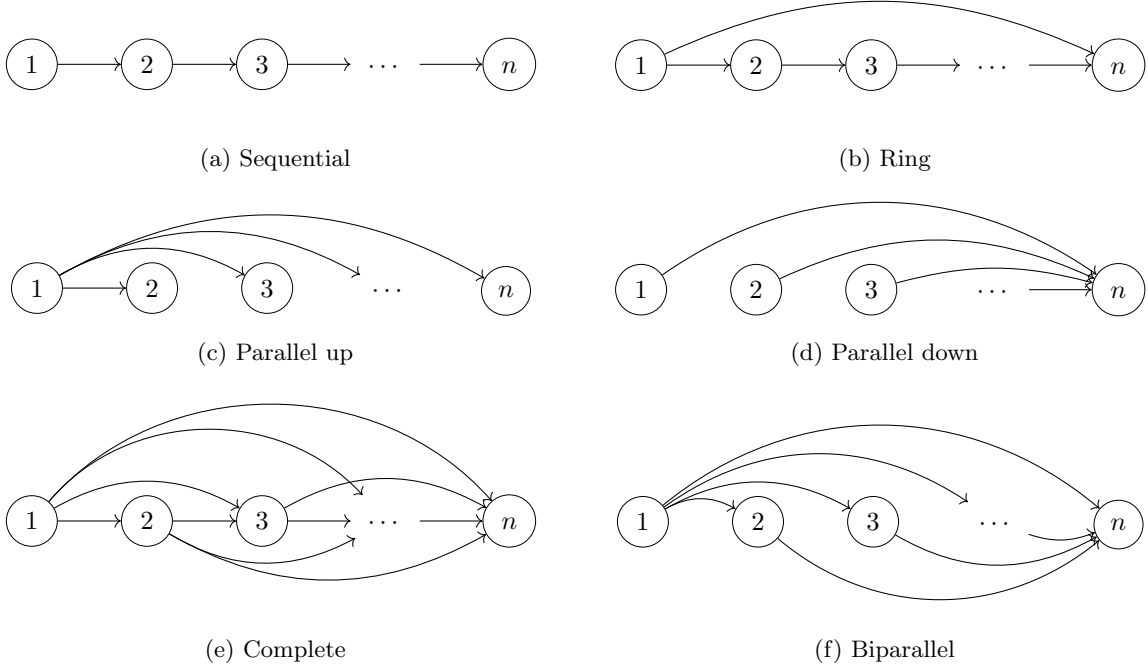

We present next our standing hypotheses on the graphs that define our family of algorithms for solving~\eqref{eq:Prob} with $m=n-1$.

\begin{assumption} \label{assgraphs} Let $A_1,\ldots,A_n:\Hi\tto\Hi$ be maximally monotone and let $B_1$,$\ldots$,$B_{n-1}:\Hi\to\Hi$ be $\beta$-cocoercive operators, with $\operatorname{zer}\left(\sum_{i=1}^nA_i + \sum_{i=1}^{n-1}B_i\right)\neq\varnothing$. We assume that there is a triple of graphs $(G, G', G'')$ of order $n\geq 2$ verifying the following conditions:

\begin{enumerate}[leftmargin=1.8cm,label=(AG\arabic*)]
    \item\label{axi: alg graph} $G=(\mathcal{N},\mathcal{E})$ is an algorithmic graph.
    \item\label{axi: alg subgraph} $G^\prime=(\mathcal{N},\mathcal{E}^\prime)\subseteq G$ is a connected spanning subgraph of $G$.
    \item\label{axi: alg subtree} $G^{\prime\prime}=(\mathcal{N},\mathcal{E}^{\prime\prime})\subseteq G$ is a spanning subgraph of $G$ such that $d_i^{\rm in}=1$ for all $i\geq 2$. Hence, for each $i\geq 2$ there is a unique $\pre(i)\in\mathcal{N}$ such that $(\pre(i),i)\in\mathcal{E}^{\prime\prime}$.
\end{enumerate}
\end{assumption}

\begin{remark}[On the role of the graphs] \label{rem:graphs}
Let us describe how each of the graphs stated in Assumption~\ref{assgraphs} plays a role in defining our iterative algorithm, presented in Section~\ref{sec:method}:
\begin{enumerate}[label=(\roman*)]
\item The algorithmic graph $G$ depicts which resolvent variables are used to update each other. Specifically, if $(i,j)\in\mathcal{E}$, then to update $x_j^{k+1}$ using the (parametrized) resolvent of $A_j$ it is necessary to evaluate the variable $x_i^{k+1}$.

\item The first subgraph $G^\prime$ is employed to gather different algorithms  within the same family. It determines how the governing and resolvent variables of the scheme interact at each iteration. Specifically, given an onto decomposition $Z\in\mathbb{R}^{n\times (n-1)}$ of $\operatorname{Lap}(G^\prime)$ (see Proposition~\ref{prop: Zdecom}), updating the resolvent variable $x_j^k$ the algorithm will use those governing variables $w_e^k$ for which $Z_{je}\neq 0$. When $G^\prime$ is a tree, this graph explicitly determines through the incidence matrix which governing variables are evaluated to update each resolvent variable (recall Remark~\ref{rem: tree incidence}). That is, updating the resolvent variable $x_j^{k+1}$ will require a combination of those governing variables $w_{e^{\rm in}}^k$ and $w_{e^{\rm out}}^k$ for which $e^{\rm in}=(i^{\rm in},j)\in\mathcal{E}^\prime$ and $e^{\rm out}=(j,i^{\rm out})\in\mathcal{E}^\prime$, with $i^{\rm in},i^{\rm out}\in\mathcal{N}$. In addition, the subgraph $G^\prime$ also affects the inverse dependence relation between resolvent and governing variables; that is, the onto decomposition $Z$ determines how the resolvent variables are combined to update the governing sequence at the end of the current iteration, once all the resolvent variables have been updated.

\item The second subgraph $G^{\prime\prime}$ determines which resolvent variable is evaluated at each cocoercive operator. Namely, if $(i,j)\in\mathcal{E}^{\prime\prime}$, then to evaluate the resolvent of $A_j$ (which updates $x_j^{k+1}$), the algorithm computes the forward operation $B_j(x_i^{k+1})$. The additional assumption on the in-degrees of $G^{\prime\prime}$ restricts us to compute only one forward operation at each resolvent, so the resulting algorithm is frugal.
\end{enumerate}

\end{remark}
For further clarification, we illustrate our specific choice of graphs for a particular algorithm in the next example. It will be revisited in Example~\ref{ex: ring}, where the full expression of the method as a particular case of our algorithm will be deduced.

\begin{example} \label{ex:graph_ring}
Let us construct the specific graphs that permit to model algorithm~\eqref{eq:ringFB}.  To this end, we set $\mathcal{N}=\{1,2,\ldots,n\}$ and follow each item in Remark~\ref{rem:graphs}:
\begin{enumerate}[label=(\roman*)]
    \item Observe that updating each $x_2^{k+1},\ldots,x_{n}^{k+1}$  requires the preceding one. Additionally, $x_n^{k+1}$  also relies on $x_1^{k+1}$. Then, our algorithmic graph $G$ is the ring (see Example~\ref{ex: ring graph}).
    \item Now we look at the dependence between resolvent and governing variables. Besides $w_i^k$, also $w_{i-1}^k$ is employed to update each variable $x_i^{k+1}$ for $i=2,3,\ldots,n-1$, while only $w_1^k$ is used to compute $x_1^{k+1}$, and the only governing variable taken into account to update $x_n^{k+1}$ is $w_{n-1}^k$. Thus, the subgraph $G^\prime$ must be sequential (see Example~\ref{ex: sequential graph}).
    \item Finally, each resolvent $B_i$ is evaluated at $x_i^{k+1}$ to update $x_{i+1}^{k+1}$, for all $i=1,\ldots,n-1$. Hence, one needs to take $G^{\prime\prime}=G^\prime$.
\end{enumerate}

\end{example}

\begin{remark}[On the cocoercivity constant]\label{rem: coco constant}
Note that in Assumption~\ref{assgraphs} the cocoercivity constant $\beta$ is assumed to be the same for all cocoercive operators. Hence, if the operators $B_i$ are $\beta_i$-cocoercive with tight constants, the largest constant we can take is $\beta:=\min\{\beta_1,\ldots,\beta_{n-1}\}$. This means that if any operator has a small cocoercivity constant, then $\beta$ will be small. The value of $\beta$ affects the stepsizes allowed by our algorithm, which are bounded in $]0,4\beta[$, so a small value of $\beta$ entails small stepsizes. An alternative would be to set all cocoercive operators equal to zero but one, which is taken as $B:=\sum_{i=1}^{n-1}B_i$. By~\cite[Proposition~4.12]{bauschke}, the operator $B$ is $\widehat\beta$-cocoercive for $\widehat\beta:=(\sum_{i=1}^{n-1} \beta_i^{-1})^{-1}$. Since $\beta\leq\widehat\beta$ with strict inequality when not all $\beta_i$ are equal, the resulting algorithm would permit to choose a larger range of stepsizes. The price to pay is that this algorithm  no longer permits a \emph{distributed implementation} (see, e.g.,~\cite{ring-networks}), as one of the nodes will need to have access to all $B_i$ to evaluate the operator $B$. Therefore, our framework is flexible to cover both implementations.
\end{remark}

Before constructing the desired operators $\mathcal{M}$, $\mathcal{A}$ and $\mathcal{B}$, we end this subsection by defining two additional matrices related to the algorithmic graphs which will be useful for our subsequent analysis.

\begin{definition}\label{def: P Q}
    Let $G$ be an oriented graph. We define the following matrices:
    \begin{enumerate}[label=(\roman*)]
        \item $P(G):=\operatorname{Deg}(G)-2\operatorname{Adj}(G)^\ast$,
        \item $Q(G):=\operatorname{Adj}(G)-\operatorname{Adj}(G)^\ast$.
    \end{enumerate}
\end{definition}

\begin{remark}\label{rem: P laplacian}
    By the definition of $P(G)$, we clearly get that $\operatorname{Lap}(G)=\frac{P(G)+P(G)^\ast}{2}$. On the other hand, $Q(G)$ is skew-symmetric.
\end{remark}

\subsection{Construction of the operators}
The operators $\mathcal{A}$ and $\mathcal{M}$ are designed as in~\cite{graph-drs}, while the operator $\mathcal{B}$
extends the constructions of~\cite{degenerate-ppp} to appropriately include the cocoercive operators according to the graph structure.

\paragraph{The preconditioner \texorpdfstring{$\mathcal{M}$}{M}:}

Let us denote
$$\mathcal{L}^\prime:=\operatorname{Lap}(G^\prime)\otimes\operatorname{Id}_\Hi \quad\text{and}\quad \mathcal{Z}:=Z\otimes\operatorname{Id}_\Hi,$$
where $\otimes$ denotes the Kronecker product and $Z\in\mathbb{R}^{n\times (n-1)}$ is an onto decomposition of $\operatorname{Lap}(G^\prime)$ as in Proposition~\ref{prop: Zdecom}.  Then, it is straightforward to see that $\mathcal{L}^\prime=\mathcal{Z}\mathcal{Z}^\ast$. We define the preconditioner $\mathcal{M}:\Hi^{2n-1}\to \Hi^{2n-1}$ as the positive semidefinite linear operator
\begin{equation}\label{eq: M}
\mathcal{M}:=\begin{bmatrix}
    \mathcal{L}^\prime & \mathcal{Z} \\ \mathcal{Z}^\ast & \operatorname{Id}_{\Hi^{n-1}}
\end{bmatrix}.
\end{equation}
By construction, we can take the operator $\mathcal{C}:\Hi^{n-1}\to \Hi^{2n-1}$ given by
\begin{equation}\label{eq:C}
\mathcal{C}:=\begin{bmatrix}
    \mathcal{Z} \\ \operatorname{Id}_{\Hi^{n-1}}
\end{bmatrix}
\end{equation}
as an onto decomposition of $\mathcal{M}=\mathcal{C}\mathcal{C}^\ast$.

\paragraph{The operator \texorpdfstring{$\mathcal{A}$}{A}:}

Set $\mathcal{A}_D:=\operatorname{diag}(A_1,\ldots,A_n)$, that is,
$$\mathcal{A}_D(\mathbf{x})=\left(A_1(x_1),\ldots,A_n(x_n)\right),\quad\forall \mathbf{x}=(x_1,\ldots,x_n)\in\Hi^n.$$
Making use of the Definition \ref{def: P Q}, denote
$$\mathcal{P}:=P\left(\overline{G^\prime}\right)\otimes \operatorname{Id}_\Hi\quad\text{and}\quad \mathcal{Q}:=Q(G^\prime)\otimes\operatorname{Id}_\Hi,$$
where $\overline{G^\prime}$ is the complementary subgraph of $G^\prime$, namely, $\overline{G^\prime}:=(\mathcal{N},\mathcal{E}\setminus\mathcal{E}^\prime)$. Hence, given $\tau>0$, we define the operator $\mathcal{A}:\Hi^{2n-1}\rightrightarrows \Hi^{2n-1}$ as
\begin{equation}\label{eq: A}
    \mathcal{A}:=\begin{bmatrix}
    \tau\mathcal{A}_D+\mathcal{P}+\mathcal{Q} & -\mathcal{Z} \\ \mathcal{Z}^\ast & \mathbf{0}_{\Hi^{n-1}}
\end{bmatrix}.
\end{equation}

\paragraph{The operator \texorpdfstring{$\mathcal{B}$}{B}:}

Let $\mathcal{R}:\Hi^n\to \Hi^n$ be the linear operator
\begin{equation*}\label{eq: R}
    \mathcal{R}:=P(G)\otimes\operatorname{Id}_\Hi,
\end{equation*}
and define $\mathcal{B}_D:\Hi^n\to \Hi^n$ to be
\begin{equation*}\label{eq: BD}
    \mathcal{B}_D:=\operatorname{diag}(0,B_1,\ldots,B_{n-1})\left(\operatorname{Adj}(G^{\prime\prime})^\ast\otimes\Id_{\Hi}\right).
\end{equation*}
More explicitly, $\mathcal{B}_D(\mathbf{x})=\left[0,B_1\left(x_{\pre(2)}\right),\ldots,B_{n-1}\left(x_{\pre(n)}\right)\right]$ for $\mathbf{x}=[x_1,\ldots,x_n]\in\Hi^n$, where $\pre$ is given in~\ref{axi: alg subtree} of Assumption~\ref{assgraphs}. Finally, define the operator $\mathcal{B}:\Hi^{2n-1}\to \Hi^{2n-1}$  as
\begin{equation}\label{eq: B}
    \mathcal{B}:=\operatorname{diag}(\tau\mathcal{B}_D+\tfrac{\tau}{4\beta}\mathcal{R}, \mathbf{0}_{\Hi^{n-1}}).
\end{equation}

\subsection{Properties of the operators}

As proved in \cite[Theoren 3.1]{graph-drs}, the set-valued operator $\mathcal{A}$ is maximally monotone. This is also the case for the operator $\mathcal{B}$, as shown next.

\begin{lemma}\label{lem: B maximally monotone}
    Under Assumption~\ref{assgraphs}, the operator $\mathcal{B}$ defined in \eqref{eq: B} is maximally monotone.
\end{lemma}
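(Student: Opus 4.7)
The plan is to invoke Proposition~\ref{prop: maximally monotone continuous}: since $\mathcal{B}$ is single-valued and continuous (each $B_i$ is $\beta$-cocoercive, hence $\tfrac{1}{\beta}$-Lipschitz continuous, and $\mathcal{R}$ is a bounded linear operator), it suffices to verify that $\mathcal{B}$ is monotone on $\Hi^{2n-1}$. Because $\mathcal{B}$ is block-diagonal with a zero block in the governing-variable component, which is trivially monotone, the task reduces to showing that the operator $\mathcal{T}:=\tau\mathcal{B}_D+\tfrac{\tau}{4\beta}\mathcal{R}$ is monotone on $\Hi^{n}$.

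I would then fix $\mathbf{x},\mathbf{y}\in\Hi^n$, write $\mathbf{u}:=\mathbf{x}-\mathbf{y}$ and expand $\langle \mathcal{B}_D\mathbf{x}-\mathcal{B}_D\mathbf{y},\mathbf{u}\rangle=\sum_{i=2}^{n}\langle B_{i-1}(x_{\pre(i)})-B_{i-1}(y_{\pre(i)}),u_i\rangle$. The trick is to split $u_i=(u_i-u_{\pre(i)})+u_{\pre(i)}$; the term along $u_{\pre(i)}$ is bounded below by $\beta\|B_{i-1}(x_{\pre(i)})-B_{i-1}(y_{\pre(i)})\|^2$ using $\beta$-cocoercivity, and the term along $u_i-u_{\pre(i)}$ is handled by Young's inequality with constant $2\beta$, yielding the lower bound
\[
\langle \mathcal{B}_D\mathbf{x}-\mathcal{B}_D\mathbf{y},\mathbf{u}\rangle \;\geq\; -\frac{1}{4\beta}\sum_{i=2}^{n}\bigl\|u_i-u_{\pre(i)}\bigr\|^2.
\]

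For the quadratic form $\langle \mathcal{R}\mathbf{u},\mathbf{u}\rangle$, I would use Remark~\ref{rem: P laplacian} together with $\operatorname{Lap}(G)=\operatorname{Inc}(G)\operatorname{Inc}(G)^\ast$ to obtain $\langle \mathcal{R}\mathbf{u},\mathbf{u}\rangle=\sum_{(i,j)\in\mathcal{E}}\|u_i-u_j\|^2$. Finally, Assumption~\ref{assgraphs}\ref{axi: alg subtree} ensures $G^{\prime\prime}\subseteq G$, so every edge $(\pre(i),i)\in\mathcal{E}^{\prime\prime}$ also belongs to $\mathcal{E}$, and dropping the remaining non-negative terms yields
\[
\sum_{(i,j)\in\mathcal{E}}\|u_i-u_j\|^2 \;\geq\; \sum_{i=2}^{n}\bigl\|u_i-u_{\pre(i)}\bigr\|^2.
\]
Combining the two bounds with the factor $\tfrac{\tau}{4\beta}$ cancels exactly, giving $\langle \mathcal{T}\mathbf{x}-\mathcal{T}\mathbf{y},\mathbf{u}\rangle\geq 0$ and thus the monotonicity of $\mathcal{B}$.

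The main obstacle is the Young's-inequality step: one must pick precisely the right weight so that the negative contribution produced by $\mathcal{B}_D$ fits exactly within the positive contribution coming from $\tfrac{1}{4\beta}\mathcal{R}$. Once that constant is pinned down, the rest is bookkeeping plus the elementary graph-inclusion observation $\mathcal{E}^{\prime\prime}\subseteq\mathcal{E}$ from~\ref{axi: alg subtree}.
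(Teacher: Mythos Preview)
Your proposal is correct and follows essentially the same route as the paper: reduce to monotonicity via Proposition~\ref{prop: maximally monotone continuous}, split $u_i=(u_i-u_{\pre(i)})+u_{\pre(i)}$, apply $\beta$-cocoercivity on the second piece, compute $\langle\mathcal{R}\mathbf{u},\mathbf{u}\rangle=\sum_{(i,j)\in\mathcal{E}}\|u_i-u_j\|^2$, and use $\mathcal{E}''\subseteq\mathcal{E}$. The only cosmetic difference is that the paper completes the square to display the result as an explicit sum of squares, whereas you phrase the same identity as a Young's-inequality bound with weight $2\beta$ followed by cancellation; the two are equivalent.
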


\begin{proof}
    It is clear that $\mathcal{B}$ is single-valued and continuous, since it is a combination of algebraic operations between cocoercive operators and linear mappings. Hence, if we prove that $\mathcal{B}$ is monotone, by Proposition \ref{prop: maximally monotone continuous}, it is maximally monotone. Take any $\mathbf{x},\mathbf{x}^\prime\in \Hi^{n}$ and let us denote $\Delta\mathbf{x}:=\mathbf{x}-\mathbf{x}^\prime\in \Hi^n$ and $\Delta\mathbf{b}:=\mathcal{B}_D(\mathbf{x})-\mathcal{B}_D(\mathbf{x}^\prime)$. Then, we need to prove that
    \begin{equation}\label{eq: B monotone}
    \left\langle\tau\Delta\mathbf{b}+\frac{\tau}{4\beta}\mathcal{R}(\Delta\mathbf{x}),\Delta\mathbf{x}\right\rangle\geq0.
    \end{equation}
    Denoting $\Delta B_{ij}:=B_i(x_j)-B_i(x^\prime_j)$, we get that
    \begin{align*}
        \langle\Delta\mathbf{b},\Delta\mathbf{x}\rangle&=\sum_{(j,i)\in\mathcal{E}^{\prime\prime}}\langle B_i(x_j)-B_i(x_j^\prime),x_i-x_i^\prime\rangle\\
        &=\sum_{(j,i)\in\mathcal{E}^{\prime\prime}}\langle \Delta B_{ij},\Delta x_i\rangle\\
        &=\sum_{(j,i)\in\mathcal{E}^{\prime\prime}}\left(\langle \Delta B_{ij},\Delta x_i-\Delta x_j\rangle+\langle \Delta B_{ij},\Delta x_j\rangle\right)\\
        &\geq\sum_{(j,i)\in\mathcal{E}^{\prime\prime}}\left(\langle \Delta B_{ij},\Delta x_i-\Delta x_j\rangle+\beta\norm{\Delta B_{ij}}^2\right).
    \end{align*}
    Further, by definition of $\mathcal{R}$, we obtain
    \begin{align*}
        \langle\mathcal{R}(\Delta\mathbf{x}),\Delta\mathbf{x}\rangle&=\sum_{i=1}^n\left(d_i\norm{\Delta x_i}^2+\sum_{(j,i)\in\mathcal{E}}-2\langle \Delta x_i,\Delta x_j\rangle\right) \\
        &=\sum_{(j,i)\in\mathcal{E}}\left(\norm{\Delta x_i}^2-2\langle \Delta x_i,\Delta x_j\rangle+\norm{\Delta x_j}^2\right) \\
        &=\sum_{(j,i)\in\mathcal{E}}\norm{\Delta x_i-\Delta x_j}^2.
    \end{align*}
    Gathering both expressions, we get
    \begin{align*}
        \left\langle\Delta\mathbf{b}+\frac{1}{4\beta}\mathcal{R}(\Delta\mathbf{x}),\Delta\mathbf{x}\right\rangle \geq~& \sum_{(j,i)\in\mathcal{E}^{\prime\prime}}\langle \Delta B_{ij},\Delta x_i-\Delta x_j\rangle\\
        & +\beta\norm{\Delta B_{ij}}^2+\frac{1}{4\beta}\sum_{(j,i)\in\mathcal{E}}\norm{\Delta x_j-\Delta x_i}^2\\
        =~ & \sum_{(j,i)\in\mathcal{E}^{\prime\prime}}\norm{\sqrt{\beta}\Delta B_{ij}+\frac{1}{2\sqrt{\beta}}\left(\Delta x_j-\Delta x_i\right)}^2\\
        & +\frac{1}{4\beta}\sum_{(j,i)\in\mathcal{E}\setminus\mathcal{E}^{\prime\prime}}\norm{\Delta x_j-\Delta x_i}^2.
    \end{align*}
    Hence~\eqref{eq: B monotone} holds, which proves that $\mathcal{B}$ is monotone, as desired.
\end{proof}

\begin{remark} Contrarily to $\mathcal{A}$, where the maximally monotone operators $A_1,\ldots,A_n$ give rise to another maximally monotone operator, the cocoercivity is no inherited by $\mathcal{B}$.
For simplicity, let $n=2$, so there is only one graph setting with two nodes. Take $B_1:=\Id_\Hi$. Thus, the operator $\mathcal{B}$ has the form
\[\mathcal{B}=\frac{\tau}{4}\left[\begin{array}{cc;{2pt/2pt}c}
    \operatorname{Id}_\Hi & 0 & 0 \\
    2\operatorname{Id}_\Hi & \operatorname{Id}_\Hi & 0 \\ \hdashline[2pt/2pt]
    0 & 0 & 0
\end{array}\right].\]
Pick any $x\in\Hi\setminus\{0\}$ and set $\mathbf{x}:=[x,0,0],\mathbf{x}^\prime:=[0,x,0]\in\Hi^3$. Then
$\langle \mathcal{B}(\mathbf{x})- \mathcal{B}(\mathbf{x}^\prime),\mathbf{x}-\mathbf{x}^\prime\rangle=0$, while $\|\mathcal{B}(\mathbf{x})- \mathcal{B}(\mathbf{x}^\prime)\|^2=2\|x\|^2$, so $\mathcal{B}$ is not cocoercive.
\end{remark}

The next result relates the set of zeros of the operator $\mathcal{A}+\mathcal{B}$ with that of the sum of the original operators. It is similar to~\cite[Theorem~3.1]{graph-drs} but
incorporates the operator $\mathcal{B}$ and the cocoercive operators $B_1,\ldots,B_{n-1}$, so we include its proof for completeness.

\begin{theorem}\label{th: zeros maximally monotone}
    Suppose that Assumption~\ref{assgraphs} holds. Given $\tau>0$, let $\mathcal{A}$ and $\mathcal{B}$ be the operators defined in \eqref{eq: A} and \eqref{eq: B}. Then, for all $\mathbf{x}=[x_1,\ldots,x_n]\in \Hi^n$, it holds that
    \[\exists\mathbf{v}\in \Hi^{n-1} \text{ such that }[\mathbf{x},\mathbf{v}]\in\operatorname{zer}(\mathcal{A}+\mathcal{B})\iff x_1=\cdots=x_n\in\operatorname{zer}\left(\sum_{i=1}^n A_i+ \sum_{i=1}^{n-1}B_i\right).\]
    Moreover, the operator $\mathcal{A}+\mathcal{B}$ is maximally monotone.
\end{theorem}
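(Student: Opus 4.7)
The plan is to prove the equivalence by unpacking the inclusion $0\in(\mathcal{A}+\mathcal{B})[\mathbf{x},\mathbf{v}]$ block by block, and then to deduce maximal monotonicity by combining Lemma~\ref{lem: B maximally monotone} with the maximal monotonicity of $\mathcal{A}$ (cited from~\cite[Theorem 3.1]{graph-drs}). Writing $\mathbf{x}=[x_1,\ldots,x_n]\in\Hi^n$ and $\mathbf{v}\in\Hi^{n-1}$, the second block of the inclusion reads $\mathcal{Z}^\ast\mathbf{x}=0$. By Proposition~\ref{prop: Zdecom} we have $\ker Z^\ast=\operatorname{span}\{\mathbf{1}\}$, hence $\ker\mathcal{Z}^\ast=\{\mathbf{1}\otimes x:x\in\Hi\}$, which forces $x_1=\cdots=x_n=:x$.

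Under this identification $\mathbf{x}=\mathbf{1}\otimes x$, I would simplify each ingredient of the first block. Since $Z^\ast\mathbf{1}=0$ (from $\operatorname{Lap}(G^\prime)\mathbf{1}=ZZ^\ast\mathbf{1}=0$), the vector $\mathcal{Z}\mathbf{v}$ lies in $\operatorname{ran}\mathcal{Z}=(\ker\mathcal{Z}^\ast)^\perp$, and since $Z$ has closed (finite-dimensional) range we get the clean description $\operatorname{ran}\mathcal{Z}=\{\mathbf{y}\in\Hi^n:\sum_i y_i=0\}$. The first block says that there exist selections $a_i\in A_i(x)$ with
\[
\mathcal{Z}\mathbf{v}\;=\;\tau(a_1,\ldots,a_n)+\mathcal{P}(\mathbf{1}\otimes x)+\mathcal{Q}(\mathbf{1}\otimes x)+\tau\mathcal{B}_D(\mathbf{1}\otimes x)+\tfrac{\tau}{4\beta}\mathcal{R}(\mathbf{1}\otimes x).
\]
Applying $\mathbf{1}^\ast\otimes\Id_\Hi$ (summing coordinates), all the graph-matrix contributions vanish: for any oriented graph $H$, $\mathbf{1}^\ast P(H)\mathbf{1}=\sum_i(d_i^{\rm out}-d_i^{\rm in})=0$, so $\mathbf{1}^\ast\mathcal{P}(\mathbf{1}\otimes x)=0$ and $\mathbf{1}^\ast\mathcal{R}(\mathbf{1}\otimes x)=0$; skew-symmetry of $Q(G^\prime)$ gives $\mathbf{1}^\ast\mathcal{Q}(\mathbf{1}\otimes x)=0$; and $\mathbf{1}^\ast\mathcal{Z}=0$. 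What survives is $\tau\bigl(\sum_{i=1}^n a_i+\sum_{i=1}^{n-1}B_i(x)\bigr)=0$, which is precisely $x\in\operatorname{zer}(\sum A_i+\sum B_i)$. The converse uses the same identity in reverse: given such $x$ with a selection $(a_i)$ realising the zero, the right-hand side of the displayed equation has components summing to $0$, hence lies in $\operatorname{ran}\mathcal{Z}$, which produces the required $\mathbf{v}$.

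For the maximal monotonicity of $\mathcal{A}+\mathcal{B}$, I would invoke Proposition~\ref{prop: sum of maximally monotone}: $\mathcal{A}$ is maximally monotone by~\cite[Theorem 3.1]{graph-drs}, while $\mathcal{B}$ is maximally monotone by Lemma~\ref{lem: B maximally monotone} and is defined everywhere on $\Hi^{2n-1}$, so $\operatorname{dom}\mathcal{B}=\Hi^{2n-1}$ and the sum is maximally monotone.

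The main obstacle is really just bookkeeping: getting every term in the first-block equation to vanish after summation, which hinges on the algebraic identities $\mathbf{1}^\ast P(H)\mathbf{1}=0$, skew-symmetry of $Q(G^\prime)$, and the kernel identity $Z^\ast\mathbf{1}=0$. Once these are in place, both implications follow by the same chain of equalities, and the maximal monotonicity is immediate from the earlier results.
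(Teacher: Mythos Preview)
Your proposal is correct and follows essentially the same route as the paper: unpack the inclusion into two blocks, use $\ker Z^\ast=\operatorname{span}\{\mathbf{1}\}$ to force $x_1=\cdots=x_n$, apply $\mathbf{1}^\ast\otimes\Id_\Hi$ to the first block to kill the graph-matrix terms and leave $\sum a_i+\sum B_i(x)=0$, reverse the argument via $\operatorname{ran}\mathcal{Z}=(\ker\mathcal{Z}^\ast)^\perp$, and conclude maximal monotonicity from \cite[Theorem~3.1]{graph-drs}, Lemma~\ref{lem: B maximally monotone} and Proposition~\ref{prop: sum of maximally monotone}. The only cosmetic difference is that the paper justifies $\mathbf{1}^\ast P(H)\mathbf{1}=0$ via $\operatorname{Lap}(H)=\tfrac{1}{2}(P(H)+P(H)^\ast)$ and $\mathbf{1}\in\ker\operatorname{Lap}(H)$, whereas you compute it directly as $\sum_i(d_i^{\rm out}-d_i^{\rm in})=0$.
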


\begin{proof}
    Let $\mathbf{x}\in \Hi^{n}$ and suppose that there exist some $\mathbf{v}\in \Hi^{n-1}$ such that $[\mathbf{x},\mathbf{v}]\in\operatorname{zer}(\mathcal{A}+\mathcal{B})$. By construction of the operators $\mathcal{A}$ and $\mathcal{B}$ this is equivalent to
        \begin{align*}
            \mathbf{0}_{\Hi^{n}} & \in\left(\tau\mathcal{A}_D + \tau\mathcal{B}_D + \mathcal{P}+\mathcal{Q}+\tfrac{\tau}{4\beta}\mathcal{R}\right)(\mathbf{x})-\mathcal{Z}\mathbf{v},\\
            \mathbf{0}_{\Hi^{n-1}} & =\mathcal{Z}^\ast\mathbf{x}.
        \end{align*}
    From the second equation, since $\ker Z^\ast=\operatorname{span}\{\mathbf{1}\}$, we easily obtain that ${x_1=\cdots=x_n=:x}$. On the other hand, the first equation implies the existence of $a_i\in A_i(x)$, for $i=1,\ldots,n$, and $b_i= B_i(x)$, for $i=1,\ldots,n-1$, such that
    \begin{equation}\label{eq: a+b+Q+P+Z}
\tau\mathbf{a}+\tau\mathbf{b}+\mathcal{Q}\mathbf{x}+\mathcal{P}\mathbf{x}+\frac{\tau}{4\beta}\mathcal{R}\mathbf{x}-\mathcal{Z}\mathbf{v}=\mathbf{0}_{\Hi^{n}},
    \end{equation}
    where $\mathbf{a}:=[a_1,\ldots,a_n]$ and $\mathbf{b}:=[0,b_1,\ldots,b_{n-1}]$.
    Now, we operate by $(\mathbf{1}\otimes \operatorname{Id}_\Hi)^\ast=\mathbf{1}^\ast\otimes\operatorname{Id}_\Hi$  on the left of equation~\eqref{eq: a+b+Q+P+Z} and calculate the summands. First, we obtain that
    \[(\mathbf{1}\otimes \operatorname{Id}_\Hi)^\ast(\tau\mathbf{a}+\tau\mathbf{b})=\tau\sum_{i=1}^n a_i+\tau\sum_{i=1}^{n-1}b_i\in\tau \left(\sum_{i=1}^n A_i+ \sum_{i=1}^{n-1}B_i\right)(x).\]
    Noting that $\mathbf{x}=(\mathbf{1}\otimes\operatorname{Id}_\Hi)x$, the remaining terms can be written as
    \begin{align*}
    (&\mathbf{1}\otimes \operatorname{Id}_\Hi)^\ast\left(\mathcal{Q}\mathbf{x}+\mathcal{P}\mathbf{x}+\tfrac{\tau}{4\beta}\mathcal{R}\mathbf{x}-\mathcal{Z}\mathbf{v}\right)\\
    &=\left(\mathbf{1}^\ast Q(G^\prime)\mathbf{1}\otimes \operatorname{Id}_\Hi\right)x+\left(\mathbf{1}^\ast P\left(\overline{G^\prime}\right)\mathbf{1}\otimes \operatorname{Id}_\Hi\right)x+\tfrac{\tau}{4\beta}\left(\mathbf{1}^\ast P(G)\mathbf{1}\otimes \operatorname{Id}_\Hi\right)x-\left(\mathbf{1}^\ast Z\mathbf{1}\otimes \operatorname{Id}_\Hi\right)\mathbf{v}.
    \end{align*}
    All terms in the previous expression are zero. Indeed, by Remark \ref{rem: P laplacian}, the matrix $Q(G^\prime)$ is skew-symmetric, so $\mathbf{1}^\ast Q(G^\prime)\mathbf{1}=0$.
    Now, let us note that
    $$\mathbf{1}^\ast P(G)\mathbf{1}=\mathbf{1}^\ast \left(\frac{1}{2}\left(P(G)+P(G)^\ast\right)\right)\mathbf{1}=\mathbf{1}^\ast \operatorname{Lap}(G)\mathbf{1}=0,$$
    since  $\mathbf{1}\in \ker(\operatorname{Lap}(G))$, according to Lemma~\ref{lem: rank incidence}. The same argument applies to $P(G')$ and also to $Z$, since $\mathbf{1}\in \ker{Z}^\ast$ by Proposition~\ref{prop: Zdecom}.

    Therefore, putting all the above computations together, we conclude that
    \begin{equation*}\label{eq: ai plus bi zero}
    0=\sum_{i=1}^n a_i+\sum_{i=1}^{n-1}b_i\in\left(\sum_{i=1}^n A_i+ \sum_{i=1}^{n-1}B_i\right)(x).
    \end{equation*}

    For the reverse implication, suppose that $x\in\operatorname{zer}\left(\sum_{i=1}^n A_i+ \sum_{i=1}^{n-1}B_i\right)$, i.e.,
    \begin{equation}\label{eq: ai plus bi zero2}
    0 = \sum_{i=1}^n a_i + \sum_{i=1}^{n-1} b_i,
    \end{equation}
    with $a_i\in A_i(x)$, for $i=1,\ldots,n$, and $b_i= B_i(x)$, for $i=1,\ldots,n-1$. Let $\mathbf{x}:=(\mathbf{1}\otimes\operatorname{Id}_\Hi)x\in \Hi^{n}$, so that one trivially has $\mathcal{Z}^\ast\mathbf{x}=0$. It thus suffices to find $\mathbf{v}\in \Hi^{n-1}$ satisfying equation~\eqref{eq: a+b+Q+P+Z} or, equivalently,
    \begin{equation}\label{eq: zero equiv ker}
    \tau\mathbf{a}+\tau\mathbf{b}+\mathcal{Q}\mathbf{x}+\mathcal{P}\mathbf{x}+\tfrac{\tau}{4\beta}\mathcal{R}\mathbf{x}\in\operatorname{Im}\mathcal{Z}=(\ker \mathcal{Z}^\ast)^\perp,
    \end{equation}
    where $\mathbf{a}:=[a_1,\ldots,a_n]$ and $\mathbf{b}:=[0,b_1,\ldots,b_{n-1}]$. Let us see that this inclusion holds. Since $\ker \mathcal{Z}^\ast=\{[x,\ldots,x] : x\in\Hi\}=\operatorname{ran}(\mathbf{1}\otimes\operatorname{Id}_\Hi)=\ker (\mathbf{1}\otimes\operatorname{Id}_\Hi)^\ast$,
    then \eqref{eq: zero equiv ker} holds if and only if
    $$(\mathbf{1}\otimes\operatorname{Id}_\Hi)^\ast\left(\tau\mathbf{a}+\tau\mathbf{b}+\mathcal{Q}\mathbf{x}+\mathcal{P}\mathbf{x}+\tfrac{\tau}{4\beta}\mathcal{R}\mathbf{x}\right)=0,$$
    which holds by~\eqref{eq: ai plus bi zero2} and the same argumentation as in the first part of the proof.

    Finally, to prove that $\mathcal{A}+\mathcal{B}$ is maximally monotone, recall that $\mathcal{A}$ and $\mathcal{B}$ are maximally monotone by \cite[Theoren 3.1]{graph-drs} and Lemma~\ref{lem: B maximally monotone}, respectively. Since $\operatorname{dom}\mathcal{B}=\Hi^{2n-1}$, the maximal monotonicity of the sum $\mathcal{A}+\mathcal{B}$ follows from Proposition~\ref{prop: sum of maximally monotone}.
\end{proof}

\begin{lemma}\label{lem: admissible preconditioner}
Suppose that Assumption~\ref{assgraphs} holds and let $Z\in\mathbb{R}^{n\times(n-1)}$ be an onto decomposition of $\operatorname{Lap}(G^\prime)$. Given $\tau>0$, let $\mathcal{M}$, $\mathcal{A}$ and $\mathcal{B}$ be the operators defined in~\eqref{eq: M}, \eqref{eq: A} and \eqref{eq: B}. Then $\mathcal{M}$ is an admissible preconditioner for $\mathcal{A}+\mathcal{B}$. Further, the operator $(\mathcal{M}+\mathcal{A}+\mathcal{B})^{-1}$ is Lipschitz continuous.
\end{lemma}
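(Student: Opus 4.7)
The plan is to show directly that $\mathcal{M}+\mathcal{A}+\mathcal{B}$ admits a single-valued, full-domain, Lipschitz continuous inverse, from which both claims of the lemma follow at once: the Lipschitz statement is proved along the way, and admissibility of $\mathcal{M}$ is immediate via the identity $J_{\mathcal{M}^{-1}(\mathcal{A}+\mathcal{B})}=(\mathcal{M}+\mathcal{A}+\mathcal{B})^{-1}\mathcal{M}$. The first preparatory step is the algebraic simplification
\[
\mathcal{L}^\prime+\mathcal{P}+\mathcal{Q}=P(G)\otimes\operatorname{Id}_\Hi=\mathcal{R},
\]
which follows from the facts that the edges of $G^\prime$ and of $\overline{G^\prime}$ partition those of $G$ (so $\operatorname{Deg}(G^\prime)+\operatorname{Deg}(\overline{G^\prime})=\operatorname{Deg}(G)$ and $\operatorname{Adj}(G^\prime)+\operatorname{Adj}(\overline{G^\prime})=\operatorname{Adj}(G)$), together with Remark~\ref{rem: P laplacian}. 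Plugging this in yields the block representation
\[
\mathcal{M}+\mathcal{A}+\mathcal{B}=\begin{bmatrix}\mathcal{T} & \mathbf{0}\\ 2\mathcal{Z}^\ast & \operatorname{Id}_{\Hi^{n-1}}\end{bmatrix},\qquad \mathcal{T}:=\left(1+\tfrac{\tau}{4\beta}\right)\mathcal{R}+\tau\mathcal{A}_D+\tau\mathcal{B}_D,
\]
so that inverting $\mathcal{M}+\mathcal{A}+\mathcal{B}$ reduces to inverting $\mathcal{T}$ and then setting $\mathbf{v}=\mathbf{w}-2\mathcal{Z}^\ast\mathbf{x}$.

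Next I would exploit the ordering of the indices imposed by the algorithmic graph. Because every edge of $G$ satisfies $i<j$, the matrix $P(G)$ is lower triangular with strictly positive diagonal entries $d_1,\ldots,d_n$, and by~\ref{axi: alg subtree} the forward term $B_{i-1}(x_{\pre(i)})$ appearing in the $i$-th component of $\mathcal{B}_D$ only involves $x_{\pre(i)}$ with $\pre(i)<i$. Consequently, the inclusion $\mathbf{y}\in\mathcal{T}(\mathbf{x})$ decouples into $n$ scalar inclusions that can be solved successively in the order $i=1,2,\ldots,n$, each of the form
\[
\widetilde{y}_i\in c_i\,x_i+\tau A_i(x_i),\qquad c_i:=\left(1+\tfrac{\tau}{4\beta}\right)d_i>0,
\]
where $\widetilde{y}_i$ is a Lipschitz function of $y_i$ and of the previously determined $x_1,\ldots,x_{i-1}$ (using that each $B_j$ is Lipschitz, being cocoercive). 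Since $A_i$ is maximally monotone, Lemma~\ref{lem: single-valued full domain resolvent} shows that $(c_i\Id_\Hi+\tau A_i)^{-1}=J_{(\tau/c_i)A_i}(\,\cdot\,/c_i)$ is single-valued on $\Hi$ and $(1/c_i)$-Lipschitz, so $x_i$ is a Lipschitz function of the data. A straightforward induction on $i$ then produces a Lipschitz continuous map $\mathbf{y}\mapsto\mathbf{x}$ from $\Hi^n$ to $\Hi^n$.

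Combining these two steps, the recovery $\mathbf{v}=\mathbf{w}-2\mathcal{Z}^\ast\mathbf{x}$ is also Lipschitz, so $(\mathcal{M}+\mathcal{A}+\mathcal{B})^{-1}$ is single-valued, has full domain $\Hi^{2n-1}$, and is Lipschitz continuous. Composing with the bounded operator $\mathcal{M}$, the same properties transfer to $J_{\mathcal{M}^{-1}(\mathcal{A}+\mathcal{B})}$, so $\mathcal{M}$ is an admissible preconditioner for $\mathcal{A}+\mathcal{B}$ in the sense of Definition~\ref{def: admissible preconditioner}. The main obstacle is the combinatorial bookkeeping that delivers the identity $\mathcal{L}^\prime+\mathcal{P}+\mathcal{Q}=\mathcal{R}$ together with the verification that $B_{i-1}(x_{\pre(i)})$ only depends on coordinates already computed at stage $i$; once these are in hand, the triangular back-substitution and the induction proceed without further delicate estimates.
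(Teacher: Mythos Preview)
Your proposal is correct and follows essentially the same route as the paper's proof: both reduce $\mathcal{L}'+\mathcal{P}+\mathcal{Q}+\tfrac{\tau}{4\beta}\mathcal{R}$ to $(1+\tfrac{\tau}{4\beta})P(G)\otimes\Id_\Hi$, exploit the lower-triangular structure of $P(G)$ together with $\pre(i)<i$ to solve for $x_1,\ldots,x_n$ sequentially via resolvents, recover $\mathbf{v}=\mathbf{y}-2\mathcal{Z}^\ast\mathbf{x}$, and conclude admissibility from $J_{\mathcal{M}^{-1}(\mathcal{A}+\mathcal{B})}=(\mathcal{M}+\mathcal{A}+\mathcal{B})^{-1}\mathcal{M}$. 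The only cosmetic difference is that the paper spells out the explicit resolvent formulas \eqref{eq:utoz resolvents} (reused later in deriving Algorithm~\ref{alg: our method}), whereas you phrase the same computation abstractly as a forward substitution plus induction.
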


\begin{proof}
Let us denote $\mathcal{A}_L:=\tau\mathcal{A}_D+\mathcal{P}+\mathcal{Q}$ and $\mathcal{B}_L:=\tau\mathcal{B}_D+\frac{\tau}{4\beta}\mathcal{R}$. Then, one has
\begin{align}
    \begin{bmatrix} {\mathbf{x}} \\ {\mathbf{v}}\end{bmatrix}&\in(\mathcal{M}+\mathcal{A}+\mathcal{B})^{-1}\begin{bmatrix} \mathbf{z} \\ \mathbf{y}\end{bmatrix} \iff\begin{bmatrix} \mathbf{z} \\ \mathbf{y}\end{bmatrix} \in     \begin{bmatrix}
   \mathcal{L}^\prime + \mathcal{A}_L + \mathcal{B}_L & \mathbf{0}_{\Hi^n}\\ 2\mathcal{Z}^\ast & \operatorname{Id}_{\Hi^n}
\end{bmatrix}\begin{bmatrix} {\mathbf{x}} \\ {\mathbf{v}} \end{bmatrix}\nonumber\\
&\iff\left\{\begin{array}{l}
\mathbf{z}\in(\mathcal{L}^\prime+\mathcal{P}+\mathcal{Q}+\tfrac{\tau}{4\beta}\mathcal{R}+\tau\mathcal{A}_D+\tau\mathcal{B}_D)(\mathbf{x}),\\
\mathbf{y}=2\mathcal{Z}^\ast{\mathbf{x}}+{\mathbf{v}}.
\end{array}\right.\label{eq: inverse M plus A plus B}
\end{align}
Now, taking into account the definition of the operators involved, we have that
\begin{align*}
\mathcal{L}^\prime+\mathcal{P}+\mathcal{Q}+\tfrac{\tau}{4\beta}\mathcal{R} &
=\left(\operatorname{Lap}(G^\prime)+P\left(\overline{G^\prime}\right)+Q\left({G^\prime}\right)+\tfrac{\tau}{4\beta} P\left({G}\right)\right)\otimes\operatorname{Id}_\Hi\\
& =\left(P\left(\overline{G^\prime}\right)+P\left({G^\prime}\right)+\tfrac{\tau}{4\beta} P\left({G}\right)\right)\otimes\operatorname{Id}_\Hi\\
& =(1+\tfrac{\tau}{4\beta})P\left({G}\right)\otimes\operatorname{Id}_\Hi.
\end{align*}
Combining this with the first inclusion in~\eqref{eq: inverse M plus A plus B} yields
\begin{equation*}
\mathbf{z} \in  \left((1+\tfrac{\tau}{4\beta})P\left({G}\right)\otimes\operatorname{Id}_\Hi \right)(\mathbf{x}) + \tau\mathcal{A}_D(\mathbf{x})+\tau\mathcal{B}_D(\mathbf{x}).
\end{equation*}
Analyzing this expression componentwise, we arrive at
\begin{equation}\label{eq:ztou}
\begin{aligned}
 z_1  \in~ & (1+\tfrac{\tau}{4\beta})d_1x_1 + \tau A_1(x_1),\\
 z_i  \in~ & (1+\tfrac{\tau}{4\beta})\left(d_i x_i - 2\sum_{(h,i)\in\mathcal{E}}x_h\right) + \tau A_i(x_i) + \tau B_{i-1}(x_{\pre(i)}),\quad\text{for }i=2,\ldots,n,
\end{aligned}
\end{equation}
where $\pre(i)$ is the unique node such that $(\pre(i),i)\in\mathcal{E}^{\prime\prime}$ (recall~\ref{axi: alg subtree} in Assumption~\ref{assgraphs}).
Then, dividing each inclusion by $(1+\tfrac{\tau}{4\beta})d_i$, letting $\gamma:=(1+\tfrac{\tau}{4\beta})^{-1}\tau$ and rearranging, we deduce that $\textbf{u}$ and $\textbf{v}$ in~\eqref{eq: inverse M plus A plus B}  are uniquely determined by
\begin{equation}\label{eq:utoz resolvents}
\left\{\begin{aligned}
x_1&=J_{\frac{\gamma}{d_1}A_1}\left(\frac{\gamma}{\tau d_1}z_1\right),\\
x_i&=J_{\frac{\gamma}{d_i}A_i}\left(\frac{2}{d_i}\sum_{(h,i)\in\mathcal{E}}x_h-\frac{\gamma}{d_i} B_i(x_{\pre(i)})+\frac{\gamma }{\tau d_i}z_i\right),\quad\text{for }i=2,\ldots,n,\\
\mathbf{v}&=\mathbf{y}-2\mathcal{Z}^\ast{\mathbf{x}}.
\end{aligned}\right.
\end{equation}
In particular, this implies that $(\mathcal{M}+\mathcal{A}+\mathcal{B})^{-1}$ is single-valued and Lipschitz, as it can be expressed as a composition of resolvents of maximally monotone operators, cocoercive operators and linear combinations. Finally, the fact that $\mathcal{M}$  is an admissible preconditioner for $\mathcal{A}+\mathcal{B}$ is a direct consequence of $J_{M^{-1}(\mathcal{A}+\mathcal{B})}=(\mathcal{M}+\mathcal{A}+\mathcal{B})^{-1}\mathcal{M}$.
\end{proof}

\section{A graph based forward-backward method}\label{sec:method}

This section is devoted to the construction and analysis of our main algorithm. After establishing its convergence, we generate
several instances of the scheme by considering different graph configurations.  Some of these coincide with or are related to some known methods, while others, as the ones generated by the complete graph, seem to be new and promising.

\subsection{Development and convergence of the method}\label{subsec:method}

Once defined the required operators and graph settings, we present the resulting method for solving \eqref{eq:Prob} in Algorithm~\ref{alg: our method}. Our main convergence result is given in Theorem~\ref{th: convergence}.

\begin{algorithm}[ht!]
    \caption{Graph based forward-backward method}
    \label{alg: our method}
    \begin{algorithmic}[1]
        \State \textbf{let:} $w_1^0,\ldots,w_{n-1}^0\in\mathcal{\Hi}$ and some parameters $\gamma\in{]0,4\beta[}$ and $\theta_k \in {] 0 ,2-\gamma/(2\beta)]}$.
        \For {$k=0,1,2,\ldots$}
        \State {$x_1^{k+1}=J_{\frac{\gamma}{d_1}A_1}\left(\frac{1}{d_1}\sum_{j=1}^{n-1}Z_{1j}w^k_j\right)$}
        \State {$x_i^{k+1}=J_{\frac{\gamma}{d_i}A_i}\left(\frac{2}{d_i}\sum_{(h,i)\in\mathcal{E}}x_h^{k+1}-\frac{\gamma}{d_i}B_{i-1}(x_{\pre(i)}^{k+1})+\frac{1}{d_i}\sum_{j=1}^{n-1}Z_{ij}w^k_j\right)$} \Comment{$i\in \llbracket 2,n\rrbracket$}
        \State {$w_i^{k+1}=w_i^k-\theta_k\sum_{j=1}^n Z_{ji}x_j^{k+1}$} \Comment{$i\in \llbracket1,n-1\rrbracket$}
        \EndFor
    \end{algorithmic}
\end{algorithm}

\begin{theorem}[Convergence of Algorithm~\ref{alg: our method}]\label{th: convergence}
    Suppose that Assumption~\ref{assgraphs} holds and let $Z\in\mathbb{R}^{n\times(n-1)}$ be an onto decomposition of the Laplacian of $G^\prime$. Pick any $w_1^0,\ldots,w_{n-1}^0\in\mathcal{\Hi}$ and let $\{w_j^k\}_{k=0}^\infty$ and $\{x_i^{k+1}\}_{k=0}^\infty$ be the sequences generated by Algorithm \ref{alg: our method} with stepsize $\gamma\in{]0,4\beta[}$ and relaxation parameters $\{\theta_k\}_{k=0}^\infty$ satisfying
\begin{equation}\label{eq:sumthetak_alg}
\theta_k \in {\left] 0 ,\tfrac{4\beta-\gamma}{2\beta}\right]} \quad\text{and}\quad \sum_{k=0}^{\infty}\theta_k\left( \tfrac{4\beta-\gamma}{2\beta} -\theta_k\right) = +\infty.
\end{equation}
Then, the following assertions hold:
    \begin{enumerate}[label=(\roman*)]
        \item $w_j^k\rightharpoonup w_j^\ast$ for some $w_j^\ast\in\Hi$, for $j=1,\ldots,n-1$;
        \item $x_i^{k+1}\rightharpoonup x^\ast\in\zer\left(\sum_{i=1}^n A_i+\sum_{j=1}^{n-1}B_j\right)$, for all $i=1,\ldots,n$, with
        \begin{align*}
        x^\ast:=& J_{\frac{\gamma}{d_1}A_1}\left(\frac{1}{d_1}
        \sum_{j=1}^{n-1}Z_{ij}w_j^\ast\right)\\
        = & J_{\frac{\gamma}{d_i}A_i}\left(\frac{2d_i^{\rm in}}{d_i}x^\ast-\frac{\gamma}{d_i}B_i(x^\ast)+\frac{1}{d_i}\sum_{j=1}^{n-1}Z_{ij}w^\ast_j\right),\quad \text{for all } i=2,\ldots,n.
        \end{align*}
    \end{enumerate}
\end{theorem}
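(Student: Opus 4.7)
The plan is to recognize Algorithm~\ref{alg: our method}, after an explicit rescaling of the governing variables and relaxation parameters, as the reduced preconditioned proximal point iteration~\eqref{eq: reduced preconditioned proximal point} applied to the operator $\mathcal{A}+\mathcal{B}$ with preconditioner $\mathcal{M}=\mathcal{C}\mathcal{C}^\ast$, so that convergence follows from \Cref{th: RPPP convergence} together with \Cref{th: zeros maximally monotone}.

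I would first fix $\tau := 4\beta\gamma/(4\beta-\gamma)>0$, which is the value of $\tau$ implicit in the derivation leading to~\eqref{eq:utoz resolvents} (since there $\gamma = (1+\tau/(4\beta))^{-1}\tau$), and use this $\tau$ in the definitions of $\mathcal{A}$, $\mathcal{B}$, $\mathcal{M}$. By \Cref{lem: parallel resolvent}, $J_{\mathcal{C}^\ast \rhd (\mathcal{A}+\mathcal{B})} = \mathcal{C}^\ast(\mathcal{M}+\mathcal{A}+\mathcal{B})^{-1}\mathcal{C}$; writing $(\mathbf{x}^{k+1},\mathbf{v}^{k+1}) := (\mathcal{M}+\mathcal{A}+\mathcal{B})^{-1}(\mathcal{C}\mathbf{y}^k)$ and combining $\mathcal{C}^\ast(\mathbf{x},\mathbf{v}) = \mathcal{Z}^\ast\mathbf{x} + \mathbf{v}$ with the last line of~\eqref{eq:utoz resolvents} reduces the relaxed RPPA iteration to $\mathbf{y}^{k+1} = \mathbf{y}^k - \tilde\theta_k \mathcal{Z}^\ast \mathbf{x}^{k+1}$. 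Setting $w_j^k := (\gamma/\tau)\,y_j^k$ and $\theta_k := (\gamma/\tau)\,\tilde\theta_k$, the resolvent formulas in~\eqref{eq:utoz resolvents} match, line by line, the updates for $x_1^{k+1},\ldots,x_n^{k+1}$ in Algorithm~\ref{alg: our method}, while the $\mathbf{y}$-update becomes the $\mathbf{w}$-update.

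Next, I would check that the admissible range~\eqref{eq:sumthetak_alg} for $\{\theta_k\}$ is precisely the image, under this rescaling, of the RPPA range~\eqref{def: relaxation paramter} for $\{\tilde\theta_k\}$. Since $\gamma/\tau = (4\beta-\gamma)/(4\beta)$, one has $\tilde\theta_k = (4\beta/(4\beta-\gamma))\,\theta_k$, so $\tilde\theta_k\in{]0,2]}$ iff $\theta_k\in{]0,(4\beta-\gamma)/(2\beta)]}$, and a direct computation gives $\tilde\theta_k(2-\tilde\theta_k) = (4\beta/(4\beta-\gamma))^2\,\theta_k((4\beta-\gamma)/(2\beta)-\theta_k)$, so the divergence requirement in~\eqref{eq:sumthetak_alg} is equivalent to $\sum_k \tilde\theta_k(2-\tilde\theta_k) = +\infty$.

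With the equivalence in place, the conclusion is almost automatic: by \Cref{th: zeros maximally monotone} the operator $\mathcal{A}+\mathcal{B}$ is maximally monotone with $\zer(\mathcal{A}+\mathcal{B})\neq\varnothing$, and by \Cref{lem: admissible preconditioner} the operator $\mathcal{M}$ is an admissible preconditioner for $\mathcal{A}+\mathcal{B}$ with $(\mathcal{M}+\mathcal{A}+\mathcal{B})^{-1}$ Lipschitz; hence \Cref{th: RPPP convergence} yields $\mathbf{y}^k\rightharpoonup \mathbf{y}^\infty$ and $(\mathbf{x}^{k+1},\mathbf{v}^{k+1})\rightharpoonup (\mathbf{x}^\infty,\mathbf{v}^\infty) := (\mathcal{M}+\mathcal{A}+\mathcal{B})^{-1}(\mathcal{C}\mathbf{y}^\infty) \in \zer(\mathcal{A}+\mathcal{B})$. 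Undoing the scaling gives (i) with $w_j^\infty := (\gamma/\tau)\,y_j^\infty$; by \Cref{th: zeros maximally monotone}, $\mathbf{x}^\infty = (x^\ast,\ldots,x^\ast)$ with $x^\ast\in\zer(\sum_iA_i+\sum_jB_j)$, giving $x_i^{k+1}\rightharpoonup x^\ast$ for every $i$. The two closed-form identities for $x^\ast$ in (ii) follow simply by evaluating~\eqref{eq:utoz resolvents} at the limit triple $(\mathbf{x}^\infty,\mathbf{v}^\infty,\mathbf{y}^\infty)$ and using $\sum_{(h,i)\in\mathcal{E}}x_h^\infty = d_i^{\rm in}x^\ast$. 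The only delicate step is the scaling bookkeeping: verifying that the single parameter $\gamma$ in Algorithm~\ref{alg: our method} corresponds simultaneously to the implicit $\tau$ inside the operators and to the rescaled relaxation factor, so that hypothesis~\eqref{eq:sumthetak_alg} is exactly the image of~\eqref{def: relaxation paramter}; once this is pinned down, the result is a direct appeal to \Cref{th: RPPP convergence}.
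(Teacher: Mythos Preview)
Your proposal is correct and follows essentially the same approach as the paper: set $\tau=4\beta\gamma/(4\beta-\gamma)$, identify Algorithm~\ref{alg: our method} (after the change of variables $\mathbf{w}^k=(\gamma/\tau)\mathbf{y}^k$, $\theta_k=(\gamma/\tau)\mu_k$) with the reduced preconditioned proximal point iteration for $\mathcal{A}+\mathcal{B}$ via~\eqref{eq:utoz resolvents}, and then invoke \Cref{th: RPPP convergence} together with \Cref{th: zeros maximally monotone} and \Cref{lem: admissible preconditioner}. Your parameter-range verification and the derivation of the closed-form limit identities are slightly more explicit than the paper's, but the argument is the same.
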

\begin{proof}
The proof is an adaptation of that of~\cite[Theorem 3.2]{graph-drs}, taking into consideration the new additions related to the inclusion of cocoercive operators. In this way, we shall rewrite Algorithm~\ref{alg: our method} as an instance of \eqref{eq: reduced preconditioned proximal point}. To this aim, set
$$ \tau:=\tfrac{4\beta}{4\beta-\gamma} \gamma >0$$
and consider the operators $\mathcal{A}$ and $\mathcal{B}$ respectively defined in \eqref{eq: A} and \eqref{eq: B}. Define $\mathcal{M}$ by~\eqref{eq: M}, which has an onto decomposition $\mathcal{M}=\mathcal{C}\mathcal{C}^\ast$, with $\mathcal{C}$ given by~\eqref{eq:C}, and let
\begin{equation*}
\mu_k:=\tfrac{4\beta}{4\beta-\gamma}\theta_k,\quad \text{for each } k=0,1,\ldots.
\end{equation*}

Note that the sequence $\{\mu_k\}_{k=0}^\infty$ verifies \eqref{def: relaxation paramter} in view of \eqref{eq:sumthetak_alg}. Hence, thanks to Lemmas~\ref{lem: admissible preconditioner} and~\ref{lem: parallel resolvent}, we can apply~\eqref{eq: reduced preconditioned proximal point} using $\{\mu_k\}_{k=0}^\infty$ as relaxation parameters. Thus, given any starting point $\mathbf{y}^0\in\Hi^{n-1}$, this gives rise to the sequence
\begin{equation}\label{eq: RPPP in our method}
\mathbf{y}^{k+1}=\mathbf{y}^k+\mu_k(J_{\mathcal{C}^\ast\rhd(\mathcal{A}+\mathcal{B})}(\mathbf{y}^k)-\mathbf{y}^k),\quad k=0,1,\ldots,
\end{equation}
with $J_{\mathcal{C}^\ast\rhd(\mathcal{A}+\mathcal{B})}(\mathbf{y}^k)=\mathcal{C}^\ast(\mathcal{M}+\mathcal{A}+\mathcal{B})^{-1}(\mathcal{C}\mathbf{y}^k)$.
Observe that, as in Lemma~\ref{lem: admissible preconditioner}, it holds $(1+\tfrac{\tau}{4\beta})^{-1}\tau=\gamma$, so if we let
\begin{equation}\label{eq:xk first N}
\begin{bmatrix} \mathbf{x}^{k+1} \\ \mathbf{v}^{k+1}\end{bmatrix}:=(\mathcal{M}+\mathcal{A}+\mathcal{B})^{-1}(\mathcal{C}\mathbf{y}^k)=(\mathcal{M}+\mathcal{A}+\mathcal{B})^{-1}\begin{bmatrix} \mathcal{Z}\mathbf{y}^{k} \\ \mathbf{y}^k\end{bmatrix},
\end{equation}
then~\eqref{eq:utoz resolvents} gives
\begin{equation}\label{eq:x comput}
\left\{\begin{aligned}
x_1^{k+1}&=J_{\frac{\gamma}{d_1}A_1}\left(\frac{\gamma}{\tau d_1}\sum_{j=1}^{n-1}Z_{1j}y^k_j\right),\\
x_i^{k+1}&=J_{\frac{\gamma}{d_i}A_i}\left(\frac{2}{d_i}\sum_{(h,i)\in\mathcal{E}}x_h^{k+1}-\frac{\gamma}{d_i} B_i(x_{\pre(i)}^{k+1})+\frac{\gamma }{\tau d_i}\sum_{j=1}^{n-1}Z_{ij}y^k_j\right),\quad\text{for }i=2,\ldots,n,\\
\mathbf{v}^{k+1}&=\mathbf{y}^k-2\mathcal{Z}^\ast{\mathbf{x}^{k+1}}.
\end{aligned}\right.
\end{equation}
Hence,
\begin{equation*}
J_{\mathcal{C}^\ast\rhd(\mathcal{A}+\mathcal{B})}(\mathbf{y}^k)=\mathcal{C}^\ast\begin{bmatrix} \mathbf{x}^{k+1} \\ \mathbf{v}^{k+1}\end{bmatrix} = \mathcal{C}^\ast\begin{bmatrix}
    \mathbf{x}^{k+1} \\ \mathbf{y}^k-2\mathcal{Z}^\ast{\mathbf{x}^{k+1}}
\end{bmatrix} = \mathbf{y}^k-\mathcal{Z}^\ast{\mathbf{x}^{k+1}},
\end{equation*}
so~\eqref{eq: RPPP in our method} becomes
\begin{equation*}
\mathbf{y}^{k+1}=\mathbf{y}^k-\mu_k\mathcal{Z}^\ast\mathbf{x}^{k+1},\quad k=0,1,\ldots.
\end{equation*}
Multiplying this expression by $\frac{\gamma}{\tau}$ and making the change of variable $\mathbf{w}^k:=\frac{\gamma}{\tau}\mathbf{y}^k$, we precisely obtain Algorithm~\ref{alg: our method}, since $\theta_k=\frac{\gamma}{\tau}\mu_k$.

Finally, to prove the convergence statements, observe that we are under the setting of Theorem~\ref{th: RPPP convergence}, so we deduce that the iterative scheme~\eqref{eq: RPPP in our method} weakly converges, satisfying $\mathbf{y}^k\wto \mathbf{y}^\ast\in\Hi^{2n-1}$ and $(\mathcal{M}+\mathcal{A}+\mathcal{B})^{-1}(\mathcal{C}\mathbf{y}^k)\wto \mathbf{u}^\ast\in\zer (\mathcal{A}+\mathcal{B})$ with
    \begin{equation}\label{eq:ulim}
    \mathbf{u}^\ast:=(\mathcal{M}+\mathcal{A}+\mathcal{B})^{-1}(\mathcal{C}\mathbf{y}^\ast).
    \end{equation}
    In view of Theorem~\ref{th: zeros maximally monotone}, $\mathbf{u}^\ast=[\mathbf{x}^*, \mathbf{v}^\ast]$ for some $\mathbf{v}^\ast\in\Hi^{n-1}$ and $\mathbf{x}^\ast=[x^\ast,\ldots,x^\ast]\in\Hi^{n}$ with $x^\ast\in\operatorname{zer}\left(\sum_{i=1}^nA_i + \sum_{i=1}^{n-1}B_i\right)$. From \eqref{eq:xk first N} we note that $\mathbf{x}^{k+1}$ contains the first $n$ components of $(\mathcal{M}+\mathcal{A}+\mathcal{B})^{-1}(\mathcal{C}\mathbf{y}^k)$, so it holds that
    \begin{equation}\label{eq:xlim}
    \mathbf{x}^{k+1}\rightharpoonup \mathbf{x}^\ast.
    \end{equation}
    Rewriting \eqref{eq:ulim} and \eqref{eq:xlim} componentwise, and having in mind our change of variable, the result follows.
\end{proof}

\subsection{Some known instances of the algorithm}\label{subsec: known methods}

In the next examples we show how Algorithm~\ref{alg: our method} encompasses some other forward-backward methods in the literature as particular cases.

\begin{example}[Ring forward-backward]\label{ex: ring}
    Take the graphs $(G,G^\prime,G^{\prime\prime})$ as in Example~\ref{ex:graph_ring}. We get $d_i=2$ for all $i=0,\ldots,n$. Since $G^\prime$ is a tree, we can choose $Z=\operatorname{Inc}(G^\prime)$. In this setting, $Z_{ii}=1$ and $Z_{(i+1)i}=-1$ for all $i=1,\ldots,n-1$, while $Z_{ij}=0$ otherwise. Therefore, it can be verified that Algorithm \ref{alg: our method} is equivalent to the one shown in~\eqref{eq:ringFB}, originally developed in \cite{ring-networks}.
\end{example}

\begin{example}[Bredies--Chenchene--Naldi splitting]\label{ex: B=0}
    If we take $B_1=\ldots=B_{n-1}=0$, our problem reduces to a sum of $n$ maximally monotone operators and Algorithm \ref{alg: our method} recovers the method presented in {\cite[Algorithm~3.1]{graph-drs}}.
\end{example}

\begin{example}[Sequential FDR]\label{ex: sequential}
    Take $G$ as the sequential graph, see Example \ref{ex: sequential graph}. In this context, the only possible spanning subgraphs are $G^\prime=G^{\prime\prime}=G$. Thus, $d_1=d_n=1$ and $d_i=2$ for all $i=2,\ldots,n-1$. Moreover, since $G^\prime$ is a tree, then $Z=\operatorname{Inc}(G^\prime)$, which has the same configuration as in Example~\ref{ex: ring}. Then, Algorithm \ref{alg: our method}  takes de form
    \begin{equation}\label{eq:seqFB}
    \left\{ \begin{taligned}
    x_1^{k+1} &=  J_{\gamma A_1}\left(w_1^k\right),\\
    x_i^{k+1} &=   J_{\frac{\gamma}{2} A_i}\left(x_{i-1}^{k+1}-\frac{\gamma}{2} B_i(x_{i-1}^{k+1})+\frac{1}{2}\left({w_{i}^k-w_{i-1}^k}\right)\right), \quad \forall i\in \llbracket 2, n-1\rrbracket,\\
    x_n^{k+1} &=  J_{\gamma A_n}\left(2x_{n-1}^{k+1}-\gamma B_n(x_{n-1}^{k+1})-{w_{n-1}^k}\right),\\
    w_i^{k+1} &=   w_i^k+\theta_k\left(x_{i+1}^{k+1}-x_{i}^{k+1}\right), \qquad \forall i\in \llbracket 1, n-1\rrbracket,
    \end{taligned}\right.
    \end{equation}
    which is the \emph{sequential FDR (Forward Douglas--Rachford)} scheme presented in \cite[Eq.~(3.15)]{degenerate-ppp}.
\end{example}

\begin{example}[Parallel FDR]\label{ex: parallel}
    Take $G$ as the parallel up graph (see Example~\ref{ex: parallel graph}) and let $G^\prime=G^{\prime\prime}=G$. Then $d_1=n-1$ and $d_i=1$ for all $i=2,\ldots,n$. Also, since $G^\prime$ is a tree, we can choose $Z=\operatorname{Inc}(G^\prime)$. Thus, $Z_{(i+1)i}=-1$ and $Z_{1i}=1$ for all $i=1,\ldots,n-1$ and $Z_{ij}=0$ otherwise. In this case Algorithm~\ref{alg: our method} is expressed as
    \begin{equation}\label{eq:parFB}
    \left\{ \begin{taligned}
    x_1^{k+1} &=  J_{\frac{\gamma}{n-1} A_1}\left(\frac{1}{n-1}\sum_{j=1}^{n-1}w_j^k\right),\\
    x_i^{k+1} &=  J_{\gamma A_i}\left(2x_1^{k+1}-\gamma B_i(x_1^{k+1})-{w_{i-1}^k}\right), \quad \forall i\in \llbracket 2, n\rrbracket,\\
    w_i^{k+1} &=  w_i^k+\theta_k\left(x_{i+1}^{k+1}-x_1^{k+1}\right), \qquad \forall i\in \llbracket 1, n-1\rrbracket,
    \end{taligned}\right.
    \end{equation}
    This is the  \emph{parallel FDR} presented in \cite[Eq. (3.14)]{degenerate-ppp}.
\end{example}

\begin{example}[Four-operator splittings]\label{ex: 4 operator}
Let $n=3$, $B_1=0$ and let $B_2=B$ for a given $\beta$-cocoercive operator $B$. Choose $G$ as the complete graph (see Example~\ref{ex: complete graph}), so $d_i=2$ for $i=1,2,3$. Set $G'$ as the subgraph parallel down (Example~\ref{ex: parallel graph}) with edges $\mathcal{E}^\prime=\left\{(1,3),(2,3)\right\}$, which is a tree, so we can take $Z=\operatorname{Inc}(G^\prime)$. Under this setting, Algorithm~\ref{alg: our method} becomes
    \begin{equation}\label{eq:4operator}
    \left\{ \begin{taligned}
    x_1^{k+1} &=  J_{\frac{\gamma}{2} A_1}\left(\frac{1}{2}w_1^k\right),\\
    x_2^{k+1} &=  J_{\frac{\gamma}{2} A_2}\left(x_1^{k+1}+\frac{1}{2}w_{2}^k\right),\\
    x_3^{k+1} &=  J_{\frac{\gamma}{2} A_3}\left(x_1^{k+1}+x_2^{k+1}-\frac{\gamma}{2}B(x_{p(3)}^{k+1})-\frac{1}{2}w_{1}^k-\frac{1}{2}w_{2}^k\right),\\
    w_1^{k+1} &=  w_1^k+\theta_k\left(x_{3}^{k+1}-x_1^{k+1}\right),\\
    w_2^{k+1} &=  w_2^k+\theta_k\left(x_{3}^{k+1}-x_2^{k+1}\right).
    \end{taligned}\right.
    \end{equation}
    Making the change of variable $\mathbf{u}:=\frac{1}{2}\mathbf{w}$, $\lambda:=\frac{\gamma}{2}$ and $\eta_k:=\frac{\theta_k}{2}$ we precisely obtain the four-operator splittings introduced in~\cite{4operator}.
    Specifically, if we take $p(3)=1$, we capture~\cite[Algorithm~1]{4operator}, while~\cite[Algorithm~2]{4operator} is obtained if we set $p(3)=2$.
\end{example}

\subsection{Recovering a recent algorithm  as a limit case}

In \cite[Theorem 8.1]{MBG24}, the authors present a new frugal splitting algorithm with minimal lifting for solving~\eqref{eq:Prob} with $n\geq 2$ and $m\geq 0$.
Given $\gamma,\lambda,\mu>0$ such that
\begin{equation}\label{eq: condition MBG}
    \frac{\lambda}{2}\sum_{j=1}^m\frac{1}{\beta_j}<2-\mu(n-1),
\end{equation}
and $w_1^0,\ldots,w_{n-1}^0 \in\Hi$, their iterative scheme is defined by
\begin{equation*}
    \left\{
    \begin{taligned}
    x_1^{k+1}&=J_{\lambda A_1}\left(u_1^k\right), \\
    x_i^{k+1}&=J_{\frac{\lambda}{\mu}A_i}\left(x_1^{k+1}+\frac{1}{\mu}u_i^k\right), \quad \forall i\in\llbracket 2,n-1\rrbracket, \\
    y_j^{k+1}&=\lambda B_j(x_1^{k+1}), \quad \forall j\in\llbracket 1,m\rrbracket, \\
    \overline{y}^{k+1}&=\sum_{j=2}^{n-1}\left(u_j^k+\mu(x_1^{k+1}-x_j^{k+1})\right)+\sum_{j=1}^{m}y_j^{k+1}, \\
    x_n^{k+1}&=J_{\lambda A_n}\left(2x_1^{k+1}-u_1^k-\overline{y}^{k+1}\right), \\
    u_i^{k+1}&=u_i^{k}-\mu\left(x_i^{k+1}-x_n^{k+1}\right), \quad \forall i\in\llbracket 1,n-1\rrbracket.
    \end{taligned}
    \right.
\end{equation*}
Substituting the variables $y_j^{k+1}$ and $\overline{y}^{k+1}$ by their expression and denoting  $B:=\sum_{j=1}^m B_j$, it can be shortened as
\begin{equation}\label{alg: MBG}
    \left\{
    \begin{taligned}
    x_1^{k+1}&=J_{\lambda A_1}\left(u_1^k\right), \\
    x_i^{k+1}&=J_{\frac{\lambda}{\mu}A_i}\left(x_1^{k+1}+\frac{1}{\mu}u_i^k\right), \quad \forall i\in\llbracket 2,n-1\rrbracket, \\
    x_n^{k+1}&=J_{\lambda A_n}\left(\left(2-\mu(n-2)\right)x_1^{k+1} +\mu\sum_{i=2}^{n-1}x_i^{k+1}-\lambda B(x_1^{k+1})-\sum_{j=1}^{n-1}u_j^k\right),\\
    u_i^{k+1}&=u_i^{k}-\mu\left(x_i^{k+1}-x_n^{k+1}\right), \quad \forall i\in\llbracket 1,n-1\rrbracket.
    \end{taligned}
    \right.
    \end{equation}
Observe that, in virtue of \eqref{eq: condition MBG}, a necessary condition for $\mu$ is that $\frac{2}{n-1}<\mu$.

Let us show that an instance of Algorithm~\ref{alg: our method} defines an algorithm which can be interpreted as the limit case of \eqref{alg: MBG} when $\mu=\frac{2}{n-1}$. To this aim, we set all cocoercive operators in our framework to be zero, except for the last one $B_{n-1}$ which is set to $B$ (see Remark~\ref{rem: coco constant}). Recall that a cocoercivity constant of $B$ is given by $\beta:=(\sum_{j=1}^m \tfrac{1}{\beta_j})^{-1}$.

First, we must find a suitable triple $(G,G^\prime,G^{\prime\prime})$. The graph $G$ is determined by the appearances of the resolvent variables $x_i^{k+1}$ that update the next ones. By just observing this relation in~\eqref{alg: MBG}, we conclude that $(1,i+1),(i,n)\in\mathcal{E}$ for all $i=1,\ldots,n-1$, i.e., $G$ is the birapallel graph (see Example~\ref{ex: biparallel graph}).

On the other hand, for all $i=1,\ldots,n-1$, only $u_i^k$ is used to update $x_i^{k+1}$, yet $u_j^k$ for all $j=1,\ldots,n-1$ appears to update $x_n^{k+1}$. With this, we can deduce that $G^\prime$ is the parallel down graph, which is a tree, so we take $Z=\operatorname{Inc}(G^\prime)$. Thus, $Z_{ii}=1$ and $Z_{n,i}=-1$ for all $i=1,\ldots,n-1$.
Lastly, the cocoercive operator $B$ is evaluated only in $x_n^{k+1}$. This tells us that $(1,n)\in\mathcal{E}^{\prime\prime}$. However, since $G^{\prime\prime}$ must be a subgraph of $G$ whose nodes have a unique predecessor, the sole option for $G^{\prime\prime}$ is the parallel up.

Applying this graph setting to Algorithm~\ref{alg: our method} with $\gamma:=(n-1)\lambda$ (which requires $\lambda\leq\frac{4\beta}{n-1}$ by Theorem~\ref{th: convergence}) and making the change of variables $\mathbf{u}^k:=\frac{1}{n-1}\mathbf{w}^k$, we obtain the iterative scheme
\begin{equation*}
    \left\{
    \begin{taligned}
    x_1^{k+1}&=J_{\lambda A_1}\left(u_1^k\right), \\
    x_i^{k+1}&=J_{\frac{\lambda(n-1)}{2}A_i}\left(x_1^{k+1}+\frac{n-1}{2}u_i^k\right), \quad \forall i\in\llbracket 2,n-1\rrbracket, \\
    x_n^{k+1}&=J_{\lambda A_n}\left(\frac{2}{n-1}\sum_{i=1}^{n-1}x_i^{k+1}-\lambda B(x_1^{k+1})-\sum_{j=1}^{n-1}u_j^k\right),\\
    u_i^{k+1}&=u_i^{k}-\frac{\theta_k}{n-1}\left(x_i^{k+1}-x_n^{k+1}\right), \quad \forall i\in\llbracket 1,n-1\rrbracket.
    \end{taligned}
    \right.
    \end{equation*}
Observe that, if we were allowed to let $\mu\to\frac{2}{n-1}$ in~\eqref{alg: MBG}, we would exactly obtain the previous iterative scheme except for the relaxation parameter in the last equation, which would only be the same when $\theta_k=2$. Nevertheless, by~\eqref{eq:sumthetak_alg}, we can only choose $\theta_k\leq 2-\frac{(n-1)\lambda}{2\beta}<2$.
It remains as an open question for future investigation to expand our framework to fully cover the framework in~\cite{MBG24}.

\subsection{A forward-backward algorithm induced by the complete graph}\label{subsec: new method}

In this section, we derive the explicit iteration of an instance of Algorithm~\ref{alg: our method} in which we take $G=G'$ as the complete graph of order $n$ (see Example~\ref{ex: complete graph}). The particular case without cocoercive operators and $n=3$ was derived in~\cite{graph-drs}, where the authors showed promising numerical results. 

The Laplacian matrix of the complete graph $G'$ is given componentwise by
\begin{equation}\label{eq: Lap complete}
\operatorname{Lap}(G')_{ij}=\begin{cases}
    n-1 & \text{if }i=j,\\
    -1 & \text{otherwise}.
\end{cases}
\end{equation}
As shown in Proposition~\ref{prop: onto complete} in the Appendix, an onto decomposition $Z\in\R^{n\times(n-1)}$ of $\operatorname{Lap}(G')$ can be defined componentwise as
\begin{equation}\label{eq: Z complete}
Z_{ij}:=\begin{cases}
\sqrt{\frac{(n-i)n}{n-i+1}} & \text{if }i=j,\\
-\sqrt{\frac{n}{(n-j)(n-j+1)}} &\text{if } i>j,\\
0&\text{otherwise}.
\end{cases}
\end{equation}
Defining the new variable $\lambda:=\frac{\gamma}{n-1}$ and the constants
\begin{equation}\label{eq: a_i t_i}
a_i:=\sqrt{\frac{(n-i)n}{n-i+1}}\quad\text{and}\quad t_i:=-\sqrt{\frac{n}{(n-i)(n-i+1)}},\quad\text{for } i=1,\ldots,n-1, 
\end{equation}
we can rewrite Algorithm~\ref{alg: our method} as
\begin{equation*}\small
\left\{ \begin{taligned}
x_1^{k+1}&=J_{\lambda A_1}\left(\frac{1}{n-1}a_1w^k_1\right),\\
x_i^{k+1}&=J_{\lambda A_i}\left(\frac{2}{n-1}\sum_{h=1}^{i-1}x_h^{k+1}-\lambda B_{i-1}(x_{\pre(i)}^{k+1})+\frac{1}{n-1}\left(\sum_{j=1}^{i-1}t_jw^k_j+a_iw_i^k\right)\right), \quad \forall i\in \llbracket 2,n-1\rrbracket, \\
x_n^{k+1}&=J_{\lambda A_n}\left(\frac{2}{n-1}\sum_{h=1}^{n-1}x_h^{k+1}-\lambda B_{n-1}(x_{\pre(n)}^{k+1})+\frac{1}{n-1}\sum_{j=1}^{n-1}t_jw^k_j\right),  \\
w_i^{k+1}&=w_i^k-\theta_k\left(a_ix_i^{k+1}+t_i\sum_{j=i+1}^n x_j^{k+1}\right),\quad\forall i\in \llbracket1,n-1\rrbracket.
\end{taligned}\right.
\end{equation*}
The resulting algorithm has irrational coefficients, but can be simplified to rational ones. Indeed, by making the change of variables ${u}_j^k:=\frac{a_j}{n-1}w_j^k$ and $\mu_k:=\frac{n}{n-1}\theta_k$ we obtain the scheme shown in Algorihtm~\ref{alg: our method complete}, which we name the \emph{complete forward-backward method}.

\begin{algorithm}[ht!]
    \caption{Complete forward-backward method}
    \label{alg: our method complete}
    \begin{algorithmic}[1]
        \State \textbf{let:} $w_1^0,\ldots,w_{n-1}^0\in\mathcal{\Hi}$
        \For {$k=0,1,2,\ldots$}
        \State {$x_1^{k+1}=J_{\lambda A_1}\left(u^k_1\right)$}
        \State {$x_i^{k+1}=J_{\lambda A_i}\left(\frac{2}{n-1}\sum_{h=1}^{i-1}x_h^{k+1}-\lambda B_{i-1}(x_{\pre(i)}^{k+1})+u_i^k-\sum_{j=1}^{i-1}\frac{u^k_j}{n-j}\right)$} \Comment{$i\in \llbracket 2,n-1\rrbracket$}
        \State {$x_n^{k+1}=J_{\lambda A_n}\left(\frac{2}{n-1}\sum_{h=1}^{n-1}x_h^{k+1}-\lambda B_{n-1}(x_{\pre(n)}^{k+1})-\sum_{j=1}^{n-1}\frac{u^k_j}{n-j}\right)$}
        \State {$u_i^{k+1}=u_i^k-\mu_k\left(\frac{n-i}{n-i+1}x_i^{k+1}-\frac{1}{n-i+1}\sum_{j=i+1}^n x_j^{k+1}\right)$} \Comment{$i\in \llbracket1,n-1\rrbracket$}
        \EndFor
    \end{algorithmic}
\end{algorithm}

The convergence of Algorithm~\ref{alg: our method complete} can be directly deduced from Theorem~\ref{th: convergence} assuming that the stepsize $\lambda$ and the relaxation parameters $\{\mu_k\}_{k=0}^\infty$ satisfy
\begin{equation}\label{eq:sumthetak_alg_complete}
\lambda\in\left] 0,\tfrac{4\beta}{n-1} \right[,\; \mu_k \in {\left] 0 ,\left(\tfrac{2}{n-1}-\tfrac{\lambda}{2\beta}\right)n\right]} \;\text{ and }\; \sum_{k=0}^{\infty}\mu_k\left( \left(\tfrac{2}{n-1}-\tfrac{\lambda}{2\beta}\right)n -\mu_k\right) = +\infty.
\end{equation}

\section{Numerical experiment}\label{sec:experiments}

In this section, we present a numerical experiment to study how the graph setting affects the performance of the algorithm. In particular, we compare the algorithms presented in Section~\ref{subsec: known methods} with the complete forward-backward method proposed in Section~\ref{subsec: new method}. For this purpose, we consider the simple problem of minimizing $n-1$ convex quadratic functions (with global minimum at the origin) over the intersection of $n$ closed balls in $\mathcal{H}=\mathbb{R}^{200}$. Namely, the problem is
\begin{equation}\label{eq: minimizing problem constraints}
	\operatorname{Minimize}\sum_{j=1}^{n-1}\left(\frac{1}{2}x^T Q_jx\right)\text{ subject to } x\in\bigcap_{i=1}^nC_i,
\end{equation}
where $Q_j\in\mathbb{R}^{200\times 200}$ are positive semidefinite and $C_i:=\{x\in\mathbb{R}^{200}: \|x-c_i\|\leq r_i\}$ have a common intersection point in the interior, for $i=1,\ldots,n$ and $j=1,\ldots,n-1$.

Problem \eqref{eq: minimizing problem constraints} can be formulated as an inclusion problem of the form \eqref{eq:Prob}. Indeed, it can be modeled as
\begin{equation}\label{eq: inclusion experiment}
	\text{Find }x\in\R^{200} \text{ such that } 0\in\sum_{i=1}^nN_{C_i}(x)+\sum_{j=1}^{n-1}Q_jx,
\end{equation}
where $N_{C_i}$ is the normal cone to $C_i$, which is maximally monotone, and $Q_j$ is $\frac{1}{\norm{Q_j}_2}$-cocoercive, for $i=1,\ldots,n$ and $j=1,\ldots,n-1$. Although problem~\eqref{eq: inclusion experiment} can be simplified by letting $Q:=\sum_{j=1}^{n-1}Q_j$, our purpose is to test a distributed setting in which only $Q_j$ is known by node $j+1$, for $j=1,\ldots,n-1$.

\subsection{Description of the instances generated}\label{subsec: description experiment}

Let us explain the process that we follow to generate random instances of problem \eqref{eq: minimizing problem constraints}. The construction is conceived with the purpose of obtaining consistent problems with a nonempty feasible set not containing the origin (which is the minimizer of the unconstrained problem). We generate random initial points for the algorithms outside of the feasible set. The process is illustrated in Figure~\ref{fig: generation of balls}.

\paragraph{Quadratic functions} We generate a random matrix $W_j\in{[-0.5,0.5]}^{200\times 200}$ and define the positive semidefinite matrix $Q_j:=\frac{1}{2}W_j^TW_j$ for all $j=1,\ldots,n-1$. The cocoercivity constant is set to $\beta:=\min\{\norm{Q_1}_2^{-1},\ldots,\norm{Q_{n-1}}_2^{-1}\}$.

\newcommand{\feas}{z}
\paragraph{Feasible constraint sets}
We first take some random point $\feas\in{[-10,10]}^{200}$ and generate the centers of the balls around this point, which will be a point in the interior of all the sets. Specifically, each center $c_i\in\R^{200}$ is randomly generated so that
$$\norm{\feas-c_i}\in\left[\tfrac{\norm{\feas}}{6},\tfrac{\norm{\feas}}{3}\right], \text{ for all } i=1,\ldots,n.$$
Now, for each center $c_i$, $i=1,\ldots,n$, we pick a random value $\varepsilon_i\in{\left]0,\norm{\feas}/6\right[}$ and  set the correspondent radius as
$$r_i:=\norm{\feas-c_i}+\varepsilon_i.$$
Note that this radius is large enough to guarantee that the feasible point belongs to the intersection of the sets (as $r_i>\norm{\feas-c_i}$), yet small enough to exclude the origin, since
$$\norm{c_i}\geq\norm{z}-\norm{z-c_i}\geq2\norm{z-c_i}\geq \norm{z-c_i}+\frac{\norm{z}}{6}> \norm{z-c_i}+\epsilon_i=r_i.$$

\paragraph{Initial points for the algorithms} Having now the balls and the quadratic functions determined, we generate the variables $w_1^0,\ldots,w_{n-1}^0\in\R^{200}$ which will initiate the iterations. For simplicity, we take $w_1^0=\cdots=w_{n-1}^0=w^0$, with
$$w^0:=\feas+\left(\max_{i=1,\ldots,n}\{2r_i-\varepsilon_i\}+{\varepsilon}\right)\omega,$$
where $\omega\in\R^{200}$ is a random unitary vector and ${\varepsilon}$ is a random value in $[0,1]$. The point $w^0$ is set in this way so that $w^0\notin C_i$ for all $i=1,\ldots,n$, to avoid starting too close to the intersection.

\newcommand{\drawaxis}[2]{
	\draw [-stealth, line width=1] (0,#1) -- (0,#2);
	\draw [-stealth, line width=1] (#1,0) -- (#2,0);
}

\newcommand{\border}{
	\draw (-1,-1) rectangle (5,5);
	}

\newcommand{\ringdashed}[5]{
	\draw [dashed, #5!40!black] (#1,#2) circle [radius={#3}];
	\draw [dashed, #5!40!black] (#1,#2) circle [radius={#4}];
	\path [draw=none,fill=#5, fill opacity = 0.2, even odd rule] (#1,#2) circle ({#4}) (#1,#2) circle ({#3});
}

\newcommand{\centerballsdashed}[3]{
	\pgfmathsetmacro{\smallradius}{sqrt((#1-2)^2+(#2-2)^2)}
	\pgfmathsetmacro{\radius}{sqrt((#1-2)^2+(#2-2)^2)+#3}
	\draw [fill=red, draw=red!60!black, fill opacity=0.15] (#1,#2) circle [radius=\radius];
	\draw [line width=0.1, red, draw opacity = 0.6] (#1,#2) circle [radius=\smallradius];
}

\newcommand{\centerballs}[3]{
	\pgfmathsetmacro{\smallradius}{sqrt((#1-2)^2+(#2-2)^2)}
	\pgfmathsetmacro{\radius}{sqrt((#1-2)^2+(#2-2)^2)+#3}
	\draw [fill, red] (#1,#2) circle [radius=0.05];
	\draw [fill=red, draw=red!60!black, fill opacity=0.15] (#1,#2) circle [radius=\radius];
}

\begin{figure}[ht!]
	\centering
	\begin{subfigure}{0.3\textwidth}
		\centering
	\begin{tikzpicture}[scale=0.75]
		\drawaxis{-1}{5}
		\draw (0,0) -- (2,2);
		\draw [fill, blue] (2,2) circle [radius=0.075];
		\pgfmathsetmacro{\normf}{sqrt(2^2+2^2)}
		\ringdashed{2}{2}{\normf / 6}{\normf / 3}{blue}
		\draw [fill, red] (2.4,2.5) circle [radius=0.05];
		\draw [fill, red] (1.4,2) circle [radius=0.05];
		\draw [fill, red] (2.3,1.3) circle [radius=0.05];
	\end{tikzpicture}
	\caption{Generation of random centers (red) given a feasible point (blue)}
\end{subfigure}
\hfill
\begin{subfigure}{0.3\textwidth}
	\centering
	\begin{tikzpicture}[scale=0.75]
		\drawaxis{-1}{5}
		\centerballsdashed{2.4}{2.5}{0.45}
		\centerballsdashed{1.4}{2}{0.25}
		\centerballsdashed{2.3}{1.3}{0.1}
		\draw (2.4,2.5) -- (2,2);
		\draw (1.4,2) -- (2,2);
		\draw (2.3,1.3) -- (2,2);
		\draw [fill, red] (2.4,2.5) circle [radius=0.05];
		\draw [fill, red] (1.4,2) circle [radius=0.05];
		\draw [fill, red] (2.3,1.3) circle [radius=0.05];
		\draw [fill,blue] (2,2) circle [radius=0.075];
	\end{tikzpicture}
	\caption{From the centers, define the radii $r_i=\norm{\feas-c_i}+\epsilon_i$}
\end{subfigure}
\hfill
\begin{subfigure}{0.3\textwidth}
	\centering
	\begin{tikzpicture}[scale=0.75]
		\drawaxis{-1}{5}
		\ringdashed{2}{2}{1.7306}{2.7306}{green}
		\centerballs{2.4}{2.5}{0.45}
		\centerballs{1.4}{2}{0.25}
		\centerballs{2.3}{1.3}{0.1}
		\draw [fill,blue] (2,2) circle [radius=0.075];
		\draw [fill,green!50!black] (2,-0.5) circle [radius=0.075];
	\end{tikzpicture}
	\caption{Choice of the point $w_0$ (green) far enough from balls (green ring)}
\end{subfigure}
	\caption{Construction of the balls $C_i$ and the initial point $w_0$ in $\mathbb{R}^2$}
	\label{fig: generation of balls}
	\end{figure}
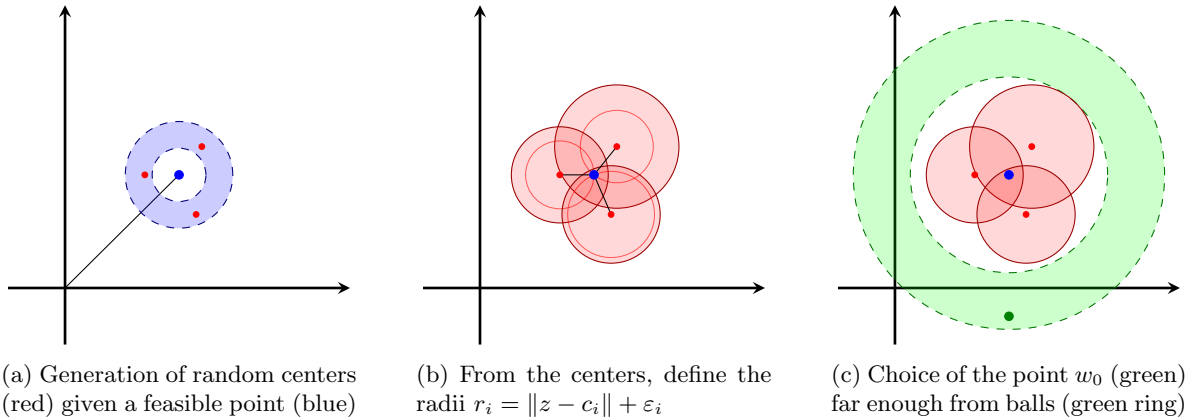

\subsection{Experiment setting and results}

In our experiment we compared the performance of the algorithms in Examples~\ref{ex: ring}, \ref{ex: sequential}~and~\ref{ex: parallel}, which will be referred to as \emph{ring}, \emph{sequential} and \emph{parallel} methods, respectively, as well as the new Algorithm~\ref{alg: our method complete}. For the latter, we considered two versions, depending on the choice of the second subgraph $G^{\prime\prime}$. Namely, we tested the sequential and the parallel-up graphs for~$G^{\prime\prime}$, so we refer to these algorithms as \emph{complete-seq} and \emph{complete-par}, respectively. For every algorithmic computation, we took the parameters
$$\gamma:={2\beta} \quad\text{and}\quad \theta_k:=0.99,\; \forall k\in\mathbb{N}.$$

For each $n\in\{3,\ldots,20\}$, we generated $10$ random problems as described above. For each problem, all the algorithms were run from the same $10$ random starting points. We computed both the iterations and the CPU running time required by each algorithms to achieve for the first time the tolerance error
\begin{equation*}
\max_{i=1,\ldots,n}\left\{\|x_i^{k+1}-x_i^k\|\right\} < 10^{-8}.
\end{equation*}
The tests were ran on a desktop of Intel Core i7-4770 CPU 3.40GHz with 32GB RAM, under Windows 10 (64-bit). The results are shown in Figure~\ref{fig: experiment}, where the colored shadows indicate the range along the $10$ problems, while the lines represent the median values.

\begin{figure}[htp!]\centering
	\begin{subfigure}{0.93\textwidth}
	\hspace{5.5mm}\includegraphics[width=\textwidth]{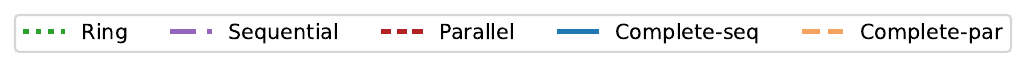}
	\end{subfigure}
	\begin{subfigure}{\ancho\textwidth}
	\includegraphics[height=0.88\textwidth]{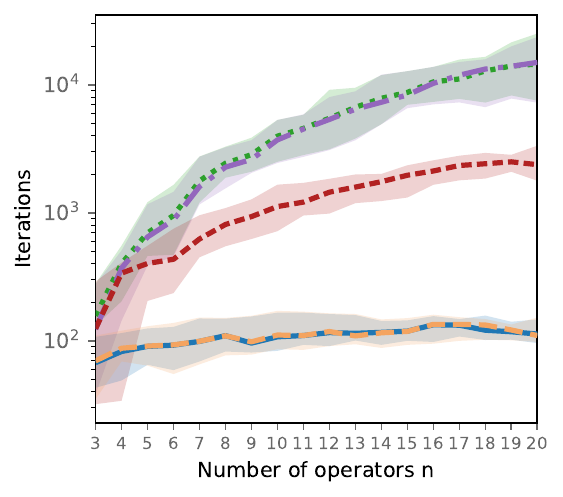}
	\end{subfigure}\hfill\begin{subfigure}{\ancho\textwidth}
	\includegraphics[height=.88\textwidth]{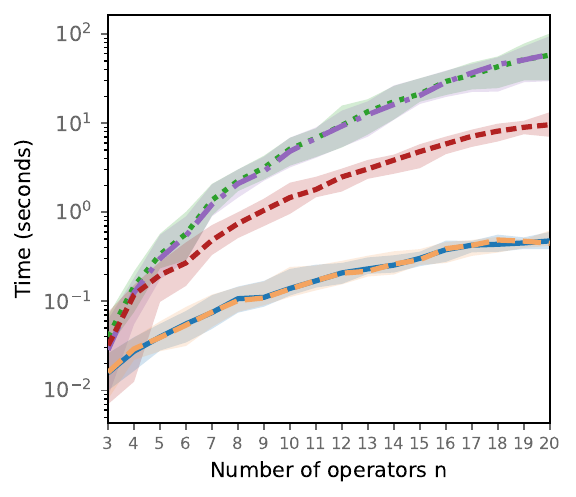}
	\end{subfigure}
	\caption{Results of the numerical experiment comparing five graph configurations for different number of operators}
	\label{fig: experiment}
\end{figure}

As we can observe, the behavior of the five algorithms can be grouped into three categories. The slowest one is formed by the ring and the sequential methods. Both use a sequential graph for $G^\prime$ and reached the tolerance practically at the same time. The intermediate category is comprised by parallel, which takes $G^\prime$ as the parallel graph. Finally, the group formed by the two complete graphs was the fastest among all of them. Both use $G^\prime$ as the complete graph and practically converged with identical speed.
 
Our experiment corroborates what was observed in~\cite{graph-drs}: a key factor for the performance of the algorithm relies on the choice of $G^\prime$. The complete forward-backward method, which is the algorithm with more connections, was the fastest. Although the number of edges seems to influence the speed of convergence, it is not the sole factor, as both parallel and sequential graphs are trees with $n-1$ edges, but parallel was significantly faster. Likely, the \emph{algebraic connectivity} of the subgraph $G^\prime$ (the smallest nonzero eigenvalue of its Laplacian) plays an important role in the performance, as it was noticed in~\cite{graph-drs}. Indeed, observe that the complete graph has algebraic connectivity $n$, the parallel graph has value $1$, and the ring and sequential graphs have $2(1-\cos(\pi/n))$ \cite{algconectivity}. This seems to be a common phenomenon in consensus algorithms (see, e.g.,~\cite{olfati2004consensus,olfati2007consensus,consensus22}).

\section{Concluding remarks}\label{sec:conclusion}

In this work we have introduced a unifying framework to construct frugal splitting forward-backward algorithms with minimal lifting for finding a zero in the sum of finitely many maximally monotone operators. This approach is an extension of  that of \cite{graph-drs} to include cocoercive operators, which are evaluated through forward steps.

Different algorithms can be constructed by imposing distinct connection patterns among the variables defining the scheme, which are modeled by certain graphs. This permits to recover some known methods in the literature, as well as derive new ones. The advantage of this framework when compared with the ad hoc technical convergence proofs designed for each particular algorithm in~\cite{ring-networks,backforw,davisyin,malitsky2023resolvent,genFB,ryu20,4operator} is clear.

As a by-product, we have derived a new splitting algorithm configured with complete graph information which significantly outperformed existing methods in our numerical test. Although this is far from an exhaustive computational study, the promising results encourage us to further investigate this algorithm in future research.

Lastly, the connections with~\cite{MBG24} are intriguing. We leave as an open question the development of an extension allowing to cover both settings, as well as the study of the role of the algebraic connectivity in the performance of these algorithms.

\appendix
\section{Appendix}

\begin{lemma}\label{lem: ai ti ai+1}
    For all $i\in\llbracket 1,n-2\rrbracket$, it holds $a_i^2=t_i^2+a_{i+1}^2$ and $a_{n-1}=-t_{n-1}=\sqrt{\frac{n}{2}}$, where $a_i$ and $t_i$ are given by~\eqref{eq: a_i t_i}.
\end{lemma}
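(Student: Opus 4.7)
The plan is to prove both statements by direct substitution using the explicit formulas for $a_i$ and $t_i$ given in~\eqref{eq: a_i t_i}; no auxiliary lemma is required.

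First, I would verify the boundary case $i=n-1$. Plugging $i = n-1$ into~\eqref{eq: a_i t_i} yields
\[
a_{n-1} = \sqrt{\frac{1\cdot n}{2}} = \sqrt{\tfrac{n}{2}}, \qquad t_{n-1} = -\sqrt{\frac{n}{1\cdot 2}} = -\sqrt{\tfrac{n}{2}},
\]
which establishes the second assertion immediately.

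For the recursive identity $a_i^2 = t_i^2 + a_{i+1}^2$, I would compute the right-hand side directly. Factoring out $\frac{n}{n-i}$ from
\[
t_i^2 + a_{i+1}^2 = \frac{n}{(n-i)(n-i+1)} + \frac{(n-i-1)n}{n-i},
\]
gives
\[
t_i^2 + a_{i+1}^2 = \frac{n}{n-i}\left(\frac{1}{n-i+1} + (n-i-1)\right) = \frac{n}{n-i}\cdot\frac{1 + (n-i-1)(n-i+1)}{n-i+1}.
\]
The key algebraic step is recognizing $(n-i-1)(n-i+1) = (n-i)^2 - 1$, which makes the numerator collapse to $(n-i)^2$. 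This leaves
\[
t_i^2 + a_{i+1}^2 = \frac{n(n-i)}{n-i+1} = a_i^2,
\]
as desired.

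There is no real obstacle here; the proof is a short computation. The only point worth noting is the telescoping structure of the identity, which reflects the fact that the columns of the onto decomposition $Z$ in~\eqref{eq: Z complete} are orthogonal (so partial row-norms of $Z$ telescope), and will presumably be used elsewhere to simplify expressions arising from the complete graph instance of Algorithm~\ref{alg: our method}.
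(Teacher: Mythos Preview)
Your proof is correct and follows essentially the same route as the paper: direct substitution for the boundary case $i=n-1$, then computing $t_i^2+a_{i+1}^2$ and simplifying via $(n-i-1)(n-i+1)=(n-i)^2-1$ to recover $a_i^2$. The only cosmetic difference is that the paper combines over the common denominator $(n-i)(n-i+1)$ before simplifying, whereas you factor out $\tfrac{n}{n-i}$ first.
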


\begin{proof}
    Clearly $a_{n-1}=-t_{n-1}$, since
    \[-t_{n-1}=\sqrt{\frac{n}{(n-(n-1))(n-(n-1)+1)}}=\sqrt{\frac{n}{2}}=\sqrt{\frac{n-(n-1)n}{n-(n-1)+1}}=a_{n-1}.\]
    Let us prove now the equality $a_i^2=t_i^2+a_{i+1}^2$. Starting by the right-hand side of the equation, one gets that
    \begin{align*}
        t_i^2+a_{i+1}^2&=\frac{n}{(n-i)(n-i+1)}+\frac{(n-i-1)n}{n-i}=\frac{n+(n-i-1)(n-i+1)n}{(n-i)(n-i+1)}\\
        &=\frac{n(1+(n-i)^2-1)}{(n-i)(n-i+1)}
        =\frac{n(n-i)}{(n-i+1)}
        =a_i^2,
    \end{align*}
    which concludes the proof.
\end{proof}

\begin{proposition}\label{prop: onto complete}
    The matrix $Z\in\R^{n\times(n-1)}$ defined in~\eqref{eq: Z complete} satisfies the following:
    \begin{enumerate}[label=(\roman*)]
        \item $\rk Z=n-1$,
        \item $L=ZZ^\ast$, where $L$ is the Laplacian matrix of the complete graph given in~\eqref{eq: Lap complete}.
    \end{enumerate}
\end{proposition}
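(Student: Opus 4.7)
My plan is to treat the two claims separately, exploiting the near-triangular shape of $Z$. Writing out the entries using the constants $a_i$ and $t_i$ from~\eqref{eq: a_i t_i}, the matrix takes the form
\[
Z=\begin{pmatrix}
a_1 & 0 & 0 & \cdots & 0\\
t_1 & a_2 & 0 & \cdots & 0\\
t_1 & t_2 & a_3 & \cdots & 0\\
\vdots & & & \ddots & \vdots\\
t_1 & t_2 & t_3 & \cdots & a_{n-1}\\
t_1 & t_2 & t_3 & \cdots & t_{n-1}
\end{pmatrix}\in\R^{n\times(n-1)}.
\]
For (i) I would simply observe that the $(n-1)\times(n-1)$ submatrix formed by the first $n-1$ rows is lower triangular with diagonal entries $a_1,\ldots,a_{n-1}$, all strictly positive. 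Hence this submatrix is invertible, so its $n-1$ columns are linearly independent, which forces $\rk Z = n-1$.

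For (ii) the plan is to compute $(ZZ^\ast)_{ij}=\sum_{k=1}^{n-1}Z_{ik}Z_{jk}$ in cases and match it against the Laplacian entries in~\eqref{eq: Lap complete}. The key ingredient will be the telescoping identity $t_k^2 = a_k^2 - a_{k+1}^2$ supplied by Lemma~\ref{lem: ai ti ai+1}, together with $a_1^2 = n-1$ and $a_{n-1}^2 = t_{n-1}^2 = n/2$. For the diagonal entries with $1<i<n$, the sum becomes $\sum_{k=1}^{i-1}t_k^2 + a_i^2$, and the telescoping collapses this to $a_1^2 = n-1$; the cases $i=1$ and $i=n$ follow similarly (the latter using $\sum_{k=1}^{n-1}t_k^2 = a_1^2 - a_{n-1}^2 + t_{n-1}^2 = n-1$).

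For an off-diagonal entry with $i<j$, only the first $i$ terms of the inner product are nonzero, contributing $\sum_{k=1}^{i-1}t_k^2 + a_i t_i$. Telescoping gives $a_1^2 - a_i^2 + a_i t_i$, so it suffices to verify that $a_i^2 - a_i t_i = n = a_1^2 + 1$. This is a short direct calculation from the definitions of $a_i$ and $t_i$: one finds $a_i t_i = -n/(n-i+1)$ and $a_i^2 = (n-i)n/(n-i+1)$, whose difference is exactly $n$. Hence $(ZZ^\ast)_{ij} = (n-1) - n = -1$, matching $L_{ij}$. I expect the main (minor) obstacle to be bookkeeping the splitting of the sum at $k=i$ correctly, since at that index the factor from row $i$ is $a_i$ while the factor from row $j$ is $t_i$; everything else is straightforward telescoping.
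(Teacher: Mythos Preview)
Your proposal is correct and follows essentially the same route as the paper: part~(i) via the lower-triangular structure with nonzero diagonal, and part~(ii) by computing $(ZZ^\ast)_{ij}$ case by case using Lemma~\ref{lem: ai ti ai+1} together with the explicit values of $a_it_i$ and $a_i^2$. The only cosmetic difference is that the paper phrases the diagonal computation as an induction on $i$ while you write the same thing as a telescoping sum; the off-diagonal calculations are identical.
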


\begin{proof}
    First of all, notice that the matrix is lower triangular with nonzero entries in its diagonal. Hence, it has maximal rank, i.e., $\rk Z=n-1$.

    To prove assertion~\emph{(ii)}, let  $Z_i=(t_1,\ldots,t_{i-1},a_i,0,\ldots,0)\in\R^{n-1}$ be the $i$-th row of $Z$. Since
    \[(ZZ^\ast)_{ij}=\begin{cases}
        \norm{Z_i}^2 & \text{if }i=j,\\
        \langle Z_i,Z_j\rangle & \text{otherwise},
    \end{cases}\]
    we need to show that $\norm{Z_i}^2=n-1$ and $\langle Z_i,Z_j\rangle=-1$ for all $i\neq j$.

    Let us first prove by induction that $\norm{Z_i}^2=n-1$, for all $i=1,\ldots,n$. By how  $Z_1$ is defined, we get that $\norm{Z_1}^2=a_1^2=n-1$. Now, suppose that $\norm{Z_i}^2=n-1$ for some $i\geq 1$. Hence, by the structure of $Z$ and Lemma~\ref{lem: ai ti ai+1}, we get that
    \[\norm{Z_{i+1}}^2=\norm{Z_i}^2-a_i^2+t_i^2+a_{i+1}^2=\norm{Z_i}^2=n-1.\]
    This shows that $\norm{Z_i}^2=n-1$, for all $i=1,\ldots,n-1$. However, notice that, again by Lema~\ref{lem: ai ti ai+1}, $a_{n-1}=-t_{n-1}$. Thus, $\norm{Z_n}^2=\norm{Z_{n-1}}^2=n-1$.

    Now, we show that $\langle Z_i,Z_j\rangle=-1$ for all $i\neq j$. By how the vectors $Z_i$ are defined, one can verify that
    \[\langle Z_i,Z_j\rangle=\langle Z_i,Z_{i+1}\rangle=\sum_{k=1}^{i-1}t_k^2+a_it_i,\quad\forall i=1,\ldots,n-1,\forall j>i.\]
    By symmetry of the dot product, this shows that the problem is reduced to the case $\langle Z_i,Z_{i+1}\rangle$ for all $i=1,\ldots,n-1$.
    Since $\norm{Z_i}^2=\sum_{k=1}^{i-1}t_k^2+a_i^2$, then
    \[\langle Z_i,Z_{i+1}\rangle=\norm{Z_i}^2-a_i^2+a_it_i.\]
    By definition of $a_i$ and $t_i$, we have that $a_it_i=-\frac{n}{n-i+1}$. Hence,
    \begin{align*}
        \norm{Z_i}^2-a_i^2+a_it_i&=n-1-\frac{(n-i)n}{n-i+1}-\frac{n}{n-i+1}=-1.
    \end{align*}
    Since this is true for all $i=1,\ldots,n-1$, all cases has been proven and, as a consequence, $L=ZZ^\ast$.
\end{proof}
\section*{Declarations}


\paragraph{Conflict of interest} The authors declare no competing interests.

\printbibliography

\end{document}